\title{From DNF compression to sunflower theorems via regularity}
\author{
Shachar Lovett\thanks{Supported by NSF grant CCF-1614023.}\\
University of California, San Diego\\
\texttt{slovett@ucsd.edu}
\and
Noam Solomon\\
MIT\\
\texttt{noam.solom@gmail.com}
\and
Jiapeng Zhang\thanks{Supported by NSF grant CCF-1614023.}\\
University of California, San Diego\\
\texttt{jpeng.zhang@gmail.com}
}
\newtheorem{theorem}{Theorem}[section]
\newtheorem{claim}[theorem]{Claim}
\newtheorem{corollary}[theorem]{Corollary}
\newtheorem{lemma}[theorem]{Lemma}
\newtheorem{definition}[theorem]{Definition}
\newtheorem{conjecture}[theorem]{Conjecture}
\newtheorem{problem}[theorem]{Problem}
\newtheorem{example}[theorem]{Example}
\newtheorem{remark}[theorem]{Remark}
\newcommand{\F}{\mathbb{F}}
\newcommand{\FF}{\mathcal{F}}
\newcommand{\PP}{\mathcal{P}}
\newcommand{\eps}{\varepsilon}
\newcommand{\D}{\mathcal{D}}
\begin{document}

\maketitle

\begin{abstract}
The sunflower conjecture is one of the most well-known open problems in combinatorics. It has several applications
in theoretical computer science, one of which is DNF compression, due to Gopalan, Meka and Reingold (Computational Complexity, 2013). In this paper, we show that improved bounds for DNF compression
imply improved bounds for the sunflower conjecture, which is the reverse direction of the DNF compression result. The main approach is based on regularity of set systems
and a structure-vs-pseudorandomness approach to the sunflower conjecture.
\end{abstract}

\section{Introduction}
The sunflower conjecture is one of the most well-known open problems in combinatorics. An $r$-sunflower is a family of $r$ sets $S_1,\ldots,S_r$
where all pairwise intersections are the same. A $w$-set system is a collection of sets where each set has size at most $w$.
Erd\H{o}s and Rado \cite{ErdosR1960} asked how large can a $w$-set system be, without containing an $r$-sunflower.
They proved an upper bound of $w! (r-1)^w$, and conjectured that the bound can be improved.
\begin{conjecture}[Sunflower conjecture, \cite{ErdosR1960}]
\label{conj:sunflower}
Let $r \ge 3$. There is a constant $c_{r}$ such that any $w$-set system $\FF$ of size $|\FF|\geq c_{r}^{w}$ contains an $r$-sunflower.
\end{conjecture}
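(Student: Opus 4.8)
The plan is to approach \Cref{conj:sunflower} through a structure-versus-pseudorandomness dichotomy for set systems, reducing the entire conjecture to a single statement about ``regular'' (spread) families. Call a $w$-set system $\FF$ \emph{$\kappa$-spread} if $|\FF_{\supseteq T}| \le \kappa^{-|T|}|\FF|$ for every nonempty set $T$, where $\FF_{\supseteq T} := \{S \in \FF : T \subseteq S\}$; padding each set with fresh dummy elements, we may assume every $S \in \FF$ has $|S| = w$ exactly, without changing $|\FF|$ or creating new sunflowers. The argument then has two halves: a \emph{structure step}, which trades a non-spread family for one of smaller width, and a \emph{pseudorandomness step}, which finds a sunflower inside a spread family.

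\emph{Structure step.} Suppose $|\FF| \ge c_r^{\,w}$ and $\FF$ is not $\kappa$-spread; choose a violating set $T$ of maximal size. Then the link $\FF_T := \{S \setminus T : S \in \FF_{\supseteq T}\}$ is a $(w-|T|)$-set system; it is again $\kappa$-spread, since any $T'$ of positive size (necessarily disjoint from $T$) satisfies $|(\FF_T)_{\supseteq T'}| = |\FF_{\supseteq T \cup T'}| \le \kappa^{-|T|-|T'|}|\FF| < \kappa^{-|T'|}|\FF_T|$ by maximality of $|T|$; and $|\FF_T| = |\FF_{\supseteq T}| > \kappa^{-|T|}c_r^{\,w} \ge c_r^{\,w-|T|}$ as soon as $c_r \ge \kappa$. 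Since an $r$-sunflower in $\FF_T$ with core $Y$ lifts to an $r$-sunflower in $\FF$ with core $Y \cup T$ by re-adjoining $T$ to every petal, it suffices to treat spread families. Iterating, the width drops by $|T| \ge 1$ at each step; and $c_r > \kappa$ forces $|T| < w$ (else $1 \ge |\FF_{\supseteq T}| > (c_r/\kappa)^w > 1$), so the recursion bottoms out at width $1$, where $c_r$ distinct singletons already furnish $r$ pairwise disjoint sets once $c_r \ge r$.

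\emph{Pseudorandomness step.} What remains is to exhibit an absolute constant $\kappa_0 = \kappa_0(r)$ such that every $\kappa_0$-spread $w$-set system --- for every $w$ --- contains $r$ pairwise disjoint sets, that is, an $r$-sunflower with empty core; one then takes $c_r := \max(\kappa_0(r), r) + 1$. The natural attack is an encoding / random-partition argument in the style of Alweiss--Lovett--Wu--Zhang: split the ground set into $r$ color classes uniformly at random and use the spread hypothesis to force, within each class, a fully contained member of $\FF$, thereby extracting $r$ members lying in distinct classes and hence pairwise disjoint. Equivalently, one wants to rule out that a mildly spread $\FF$ is ``covered'' by the satisfying assignments of a narrow DNF whose terms carry no large matching --- which is precisely the perspective this paper develops, turning spreadness of $\FF$ into non-compressibility of an associated DNF.

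\emph{The obstacle.} The hard part is the pseudorandomness step with $\kappa_0$ \emph{constant in $w$}. The arguments currently available --- Alweiss--Lovett--Wu--Zhang, together with the refinements of Rao, Tao, and Bell--Chueluecha--Warnke --- only deliver it with $\kappa_0 = \Theta(r \log w)$, hence $c_r = \Theta(\log w)$ rather than an absolute constant, and eliminating this $\log w$ is essentially equivalent to the conjecture. The route here is to offload that factor onto DNF compression: a sufficiently strong DNF-compression bound, beating the present $w^{O(w)}$-type estimates in the regime governing spread families, would certify that any constantly-spread family already contains the sought matching. Pinning down this correspondence exactly, isolating the precise DNF-compression bound that suffices, and then either establishing it or recognizing it as a weak form of the conjecture, is where the genuine difficulty lies.
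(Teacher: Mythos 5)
The statement you were handed is the Erd\H{o}s--Rado sunflower conjecture itself (\Cref{conj:sunflower}). It is an open problem, and this paper does not prove it --- not even conditionally. What the paper actually proves is \Cref{thm:main}: \emph{assuming} the DNF compression conjecture (\Cref{conj:upper_bound_DNF}, or the weaker \Cref{conj:upper_bound_weak}), every $w$-set system of size at least $(\log w)^{c_r w}$ contains an $r$-sunflower. That exponent, $c_r w \log\log w$, is a genuine improvement over the unconditional $\Theta(w \log w)$ of Erd\H{o}s--Rado, but it is still short of the conjectured $\Theta(w)$. So there is no ``paper's own proof'' of \Cref{conj:sunflower} to compare you against; the right reference point is the conditional \Cref{thm:main} and its proof via \Cref{lemma:satisfy}, \Cref{lemma:alpha_beta}, and \Cref{claim:alpha_w_r_sunflower}.

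Within that caveat, your proposal is honest and the part you do write out is essentially correct. The structure step --- take a maximal violating $T$, pass to the link $\FF_T$, note that maximality makes the link $\kappa$-spread, check $|T|<w$ from $c_r>\kappa$, and observe that sunflowers lift --- is exactly the reduction underlying \Cref{claim:alpha_w_r_sunflower} (the paper phrases it by induction on $w$ rather than maximality, but it is the same mechanism, and your maximality variant is sound). You then correctly isolate the missing ingredient: a bound $\kappa_0(r)$, \emph{independent of $w$}, such that every $\kappa_0$-spread $w$-set system contains $r$ pairwise disjoint sets, and you correctly observe that current techniques (Rossman-style; in this paper, \Cref{lemma:satisfy} plus \Cref{lemma:alpha_beta}; and the later ALWZ line) deliver only $\kappa_0 = \mathrm{poly}(r) \cdot \mathrm{polylog}(w)$. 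Removing the $\log w$ is indeed the whole problem --- the paper encodes it as $\beta(w) \le O(1)$ in \Cref{conj:beta_log}, and its own conditional input (\Cref{conj:upper_bound_weak}) only yields $\beta(w) \le (\log w)^{O(1)}$, hence the $(\log w)^{O(w)}$ size threshold. In short: no gap in what you wrote, but you did not produce a proof, and you said as much; neither does the paper. The useful observation for you is how the paper discharges the $r>2$ case from the $r=2$ case --- the amortized matching-extraction argument in \Cref{lemma:alpha_beta}, which converts a bound on $\beta(w)=\alpha(w,2)$ into a bound on $\alpha(w,r)$ at a multiplicative cost of $r2^{r+1}$ and a blow-up of $w$ to $wr$. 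That piece is unconditional and is the part of your sketch that is genuinely more involved than ``color randomly and union-bound.''
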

60 years later, only lower order improvements have been achieved, and the best bounds are still of the order of magnitude of about  $w^w$ for any fixed $r$, same as in the original theorem of Erd\H{o}s and Rado. A good survey on the current bounds is \cite{kostochka2000extremal}.

Sunflowers have been useful in various areas in theoretical computer science.
Some examples include monotone circuit lower bounds  \cite{razborov1985lower,rossman2010monotone}, barriers for improved algorithms for matrix multiplication \cite{alon2013sunflowers}
and faster deterministic counting algorithms via DNF compression \cite{gopalan2013dnf}. The focus on this paper is on this latter application, in particular DNF compression.

A DNF (Disjunctive Normal Form) is disjunction of conjunctive terms. The \emph{size} of a DNF is the number of terms, and the \emph{width} of a DNF is the maximal number of literals in a term.
It is a folklore result that any DNF of size $s$ can be approximated by another DNF of width $O(\log s)$, by removing all terms of larger width. The more interesting direction
is whether DNFs of small width can be approximated by DNFs of small size. Namely - can DNFs of small width be ``compressed" while approximately preserving their computational structure?

A beautiful result of Gopalan, Meka and Reingold \cite{gopalan2013dnf} shows that DNFs of small width can be approximated by small size DNFs. Their proof relies on the sunflower theorem (more precisely,
a variant thereof due to Rossman \cite{rossman2010monotone} that we will discuss shortly). Before stating their result, we introduce some necessary terminology. We say that two functions $f,g:\{0,1\}^n \to \{0,1\}$ are $\eps$-close if $\Pr[f(x) \ne g(x)] \le \eps$ over a uniformly chosen input. We say that $f$ is a lower bound of $g$, or that $g$ is an upper bound of $f$, if $f(x) \le g(x)$ for all $x$.

\begin{theorem}[DNF compression using sunflowers, sandwiching bounds \cite{gopalan2013dnf}]
\label{thm:GMR}
Let $f$ be a width-$w$ DNF. Then for every $\eps>0$ there exist two width-$w$ DNFs, $f_{lower}$ and $f_{upper}$ such that
\begin{enumerate}[(i)]
\item $f_{lower}(x) \le f(x) \le f_{upper}(x)$ for all $x$.
\item $f_{lower}$ and $f_{upper}$ are $\eps$-close.
\item $f_{lower}$ and $f_{upper}$ have size $(w \log(1/\eps))^{O(w)}$.
\end{enumerate}
\end{theorem}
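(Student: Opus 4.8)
The plan is to repeatedly simplify $f$ using Rossman's quasi-sunflower variant of the Erd\H{o}s--Rado theorem, in two complementary ways: one that only increases the function (giving $f_{upper}$) and one that only decreases it (giving $f_{lower}$). Write $f=\bigvee_{i=1}^{s}C_{i}$, where each term $C_{i}$ is a conjunction of at most $w$ literals, and identify $C_{i}$ with the set of literals it mentions, a subset of $\{x_{1},\bar x_{1},\dots,x_{n},\bar x_{n}\}$ of size at most $w$; the terms thus form a $w$-set system $\FF$. For a uniformly random input $x$, the set $W$ of satisfied literals is a uniformly random transversal of the $n$ complementary pairs and $C_{i}(x)=1$ iff $C_{i}\subseteq W$. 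Since every term has size at most $w\ll n$, one may pass freely between this model and the one in which each literal is included independently with probability $1/2$, at negligible cost, so throughout we think of $W$ as a uniform random subset.

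The engine is the quasi-sunflower lemma: there is a threshold $K=K(w,\delta)=(w\log(1/\delta))^{O(w)}$ such that every family of at least $K$ sets of size at most $w$ contains a \emph{$(1-\delta)$-quasi-sunflower}: a core $Y$ and a subfamily $\mathcal{G}$ with $|\mathcal{G}|\ge 2$, all of whose members contain $Y$, whose petals $\{S\setminus Y:S\in\mathcal{G}\}$ are spread enough that $\Pr_{W}[\,\exists S\in\mathcal{G}:S\subseteq W \mid Y\subseteq W\,]\ge 1-\delta$. Moreover, as soon as $|\mathcal{G}|$ exceeds $2^{O(w)}\log(1/\delta)$, any subfamily of $\mathcal{G}$ of that size still enjoys the same relative guarantee. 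We will fix $\delta$ at the end; since $\log(1/\delta)$ will be $O(w\log w+\log(1/\eps))$, the dependence on $\delta$ only changes the implicit constant in the exponent of the final size bound.

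To build $f_{upper}$, maintain a width-$w$ DNF, initially $f$. While its set of terms has at least $K$ members, find a $(1-\delta)$-quasi-sunflower $(Y,\mathcal{G})$ among them, replace every term of $\mathcal{G}$ by the single term $Y$, and then delete every remaining term that contains $Y$ (each is pointwise implied by $Y$, so this deletion does not change the function). The new DNF pointwise dominates the old one, and the two differ only on $\{x:Y(x)=1,\ \text{no }S\in\mathcal{G}\text{ is satisfied}\}$, a set of measure at most $\delta\cdot 2^{-|Y|}$. After the deletion, $Y$ is the only surviving term containing $Y$, so no later step can use $Y$ --- or any set containing $Y$ --- as its core; hence every core is used at most once, the process terminates, and at termination the DNF has fewer than $K=(w\log(1/\eps))^{O(w)}$ terms. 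Call it $f_{upper}\ge f$. The DNF $f_{lower}$ is built dually: whenever a quasi-sunflower $(Y,\mathcal{G})$ with more than $2^{O(w)}\log(1/\delta)$ petals is found, \emph{keep only} that many of its petals and delete the rest of $\mathcal{G}$; this only shrinks the function, terminates with fewer than $K$ terms, and each step changes the function on a set of measure at most $\delta\cdot 2^{-|Y|}$.

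It remains to bound the total error. Since the $f_{upper}$-process only grows the DNF, the error sets $E_{1},E_{2},\dots$ of the successive steps are pairwise disjoint: writing $Y_{t}$ for the $t$-th core, one has $E_{t}\subseteq\{x:Y_{t}(x)=1\}$, while by step $t$ all earlier cores $Y_{1},\dots,Y_{t-1}$ are terms of the current DNF, so every $x\in E_{t}$ satisfies $Y_{1}(x)=\dots=Y_{t-1}(x)=0$ and therefore lies in no earlier $E_{t'}$. Consequently $\Pr[f_{upper}\ne f]=\sum_{t}\Pr[E_{t}]\le\delta\sum_{t}2^{-|Y_{t}|}$, and symmetrically for $f_{lower}$. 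The crux of the whole argument --- and the step I expect to be the main obstacle --- is to show that $\sum_{t}2^{-|Y_{t}|}\le(w\log(1/\eps))^{O(w)}$ \emph{with no dependence on $n$ or on the initial size $s$}; this is where the ``each core used at most once'' property must be combined with the nested structure of the cores (organized, say, by core size, which contributes the factor $w$ in the exponent) to turn the sum into a telescoping/antichain estimate. Granting this, set $\delta=\eps/(w\log(1/\eps))^{O(w)}$, so that $\log(1/\delta)=O(w\log w+\log(1/\eps))$ and all sizes remain $(w\log(1/\eps))^{O(w)}$; then $\Pr[f_{upper}\ne f]\le\eps$ and $\Pr[f_{lower}\ne f]\le\eps$. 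Finally $f_{lower}\le f\le f_{upper}$ by construction, and $\Pr[f_{lower}\ne f_{upper}]\le\Pr[f_{lower}\ne f]+\Pr[f\ne f_{upper}]\le 2\eps$, so replacing $\eps$ by $\eps/2$ throughout gives (i)--(iii).
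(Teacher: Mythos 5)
This theorem is cited from Gopalan--Meka--Reingold \cite{gopalan2013dnf}; the paper you are reading does not reprove it, so there is no internal proof to compare against. Judged on its own terms, your sketch has the right engine --- Rossman's quasi-sunflower lemma driving an iterative replace-by-core simplification, exactly the ingredient GMR build on --- and several of your observations are correct and nontrivial (the pointwise monotonicity, the disjointness of the per-step error sets $E_t$, and the fact that once $Y_t$ is installed and its supersets pruned no later core can contain $Y_t$, so cores never repeat). But the argument as written has a genuine gap, and it is exactly the one you flag yourself.

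The gap is the claim $\sum_t 2^{-|Y_t|} \le (w\log(1/\eps))^{O(w)}$, independent of $n$ and of the initial size $s$. Nothing you establish forces this. Distinctness of the cores, or even the stronger property that later cores are never supersets of earlier ones, does not cap the sum: within a fixed core size $k$ you only learn that the cores are pairwise distinct $k$-sets, of which there can be $\binom{n}{k}$. Disjointness of the $E_t$ gives $\sum_t \Pr[E_t]\le 1$ for free, without the quasi-sunflower lemma at all --- but that is a far cry from $\le\eps$. And the natural attempt to improve it --- write $E_t=\{Y_t\text{ sat},\ f^{(t)}=0\}$ and hope that $\Pr[E_t]\le\delta\cdot\Pr[A_t]$ for some disjoint events $A_t$ with $\sum\Pr[A_t]\le O(1)$ --- collapses, because $\{f^{(t)}=0\}$ already implies that no petal of $\mathcal{G}_t$ is satisfied, so the conditional probability you would want to invoke degenerates to $1$, not $\delta$. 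In short, the ``telescoping/antichain estimate'' you gesture at is not there, and without it the total error is only bounded by $\min(1,\ \delta\sum_t 2^{-|Y_t|})$, neither of which is $\le\eps$. The actual GMR argument organizes the simplification level by level in the width and does a more careful accounting of the number of replacement steps and of how much mass each level can contribute; recovering that accounting is precisely the missing work here. Until $\sum_t 2^{-|Y_t|}$ (or, equivalently, the number of quasi-sunflower steps) is bounded by a quantity depending only on $w$ and $\eps$, the proof does not go through.
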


Recently, Lovett and Zhang \cite{Lovett2018DNFSB} improved the dependence of the size of the lower bound DNF on $w$
(but with a worse dependence on $\eps$). In particular, the proof avoids the use of the sunflower theorem.

\begin{theorem}[DNF compression without sunflowers, lower bound \cite{Lovett2018DNFSB}]
\label{thm:lower_bound_DNF}
Let $f$ be a width-$w$ DNF. Then for every $\eps>0$ there exists a width-$w$ DNFs $f_{lower}$ such that
\begin{enumerate}[(i)]
\item $f_{lower}(x) \le f(x)$ for all $x$.
\item $f_{lower}$ and $f$ are $\eps$-close.
\item $f_{lower}$ has size $(1/\eps)^{O(w)}$.
\end{enumerate}
\end{theorem}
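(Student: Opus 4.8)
The plan is to take $f_{lower}$ to be a disjunction of \emph{implicants} of $f$ --- conjunctions whose subcubes lie entirely inside $f^{-1}(1)$ --- so that property (i) holds automatically and the task reduces to a covering problem: find subcubes $C_1,\dots,C_s\subseteq f^{-1}(1)$, each of codimension at most $w$, with $s\le(1/\eps)^{O(w)}$ and $\Pr_x[f(x)=1\text{ and }x\notin C_1\cup\dots\cup C_s]\le\eps$. After the standard cleanup (delete any term implied by the others, and any term whose literal set contains that of another term), write $f=T_1\vee\dots\vee T_m$; if $m\le(1/\eps)^{O(w)}$ we are done with $f_{lower}=f$, so assume $m$ is large.

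The core subroutine is: given a width-$w'$ DNF $h$ with many terms, produce a low-codimension subcube almost contained in $h^{-1}(1)$. I would extract, by a greedy procedure on the terms of $h$, a common sub-pattern $\rho$ with $|\rho|\le w'$ together with $r$ of the terms whose parts outside $\rho$ are pairwise disjoint (``petals''); the number $r$ can be made as large as we like by raising the threshold on $m$, and the ``beyond sunflowers'' point is that a robust, spreadness-based version of this extraction reaches a useful $r$ already at $m=(1/\eps)^{O(w')}$ terms, as opposed to the $(w')^{O(w')}$ terms needed to invoke the Erd\H{o}s--Rado sunflower lemma. On the subcube $C_\rho$ the events ``violate petal $P_j$'' are independent, each of probability at most $1-2^{-w'}$, so a uniform point of $C_\rho$ lies outside $h^{-1}(1)$ with probability at most $(1-2^{-w'})^r$, which is below any target $\delta$ once $r=O(2^{w'}\log(1/\delta))$. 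Thus $C_\rho$ is, up to a $\delta$-fraction, an implicant of $h$, hence of $f$. Around this subroutine I would run an outer recursion on the width: if some literal occurs in a constant fraction of the current terms (the \emph{structured} case), restrict that variable to reduce the width and recurse; otherwise (the \emph{pseudorandom} case) the terms are spread, the subroutine applies, and I commit $C_\rho$ to $f_{lower}$, delete the terms it swallows, and continue. Summing the per-step errors and composing the per-step size blow-ups over the $O(w)$ levels of width yields total error at most $\eps$ and size $(1/\eps)^{O(w)}$.

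Two points need care. The first is quantitative and is the heart of the matter: the pseudorandom case must yield its core already when $h$ has only $(1/\eps)^{O(w)}$ terms and with only a $(1/\eps)^{O(1)}$ cost per unit of width, i.e.\ one needs a robust sunflower / spreadness estimate that beats the $w^{O(w)}$ threshold of Erd\H{o}s--Rado --- this is precisely what lets the final bound shed the factor $w^{O(w)}$ present in \Cref{thm:GMR} and depend on $\eps$ alone. The second is that a committed core is only \emph{approximately} inside $f^{-1}(1)$, while (i) demands exact containment; the remedy is to shrink $\rho$ by fixing a few more coordinates until $C_\rho$ becomes a genuine implicant, losing only a negligible fraction of its measure (possible because the overflow of $C_\rho$ is itself thinly spread), and to maintain a global error budget so that the overflows of all $(1/\eps)^{O(w)}$ committed cores still sum to at most $\eps$. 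One must also ensure, since $m$ is unbounded, that no $\log m$ factor creeps in --- the structured/pseudorandom thresholds have to be tuned so that the core-producing case is reached within $O(w)$ rounds of restriction, keeping the exponent $O(w)$.
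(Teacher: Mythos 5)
This theorem is cited from \cite{Lovett2018DNFSB} and is not reproved in the present paper, so there is no internal proof to compare against; I assess your proposal on its own. The fatal gap is the ``robust sunflower'' subroutine, which you assert rather than prove, and which the arithmetic does not support. To drive $(1-2^{-w'})^{r}$ below $\delta$ you need $r\gtrsim 2^{w'}\log(1/\delta)$ pairwise-disjoint petals. The mechanism for extracting $r$ disjoint sets from a set system is a regularity/spreadness threshold (this is exactly \Cref{claim:regular_disjoint_base}), which requires the regularity parameter to satisfy $\kappa\gtrsim w'r^{2}\approx w'4^{w'}\log^{2}(1/\delta)$; and a $w'$-set system is only forced to contain a $\kappa$-regular residue (after peeling off heavy cores) once it has more than about $\kappa^{w'}=2^{\Theta(w'^{2})}$ terms. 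That is not $(1/\eps)^{O(w')}$ --- for fixed $\eps$ it is \emph{worse} than the $(w\log(1/\eps))^{O(w)}$ of \Cref{thm:GMR}, so the petal-plus-independence route as you set it up cannot produce the improvement this theorem claims. No ``spreadness estimate'' of the shape you invoke (exponentially many disjoint petals already from $\eps$-polynomially many sets) is available, and the paper is explicit that the Lovett--Zhang argument \emph{avoids} sunflower-style extraction altogether; the improvement over \Cref{thm:GMR} comes precisely from not routing through disjoint petals.

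Two secondary issues. First, your committed subcubes $C_\rho$ are only approximate implicants and you propose to shrink them by fixing more coordinates; you never verify that the resulting codimension stays at most $w$, which the statement requires. The cleaner route --- and the notion of \emph{proper} lower bound DNF used throughout this paper --- is to commit a subset of the \emph{original} terms (e.g.\ the $r$ terms $\rho\wedge P_1,\dots,\rho\wedge P_r$, each already of width at most $w$); this sidesteps the width problem and matches what both \cite{gopalan2013dnf} and \cite{Lovett2018DNFSB} actually produce. Second, your outer loop only provably makes progress in the structured case (width drops); in the pseudorandom case you ``commit $C_\rho$ and delete the terms it swallows'' but give no bound on how many such rounds occur at a fixed width before the residual mass falls below $\eps$, so even granting the subroutine, the $(1/\eps)^{O(w)}$ size bound is not derived.
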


It is natural to speculate that a similar bound holds for upper bound DNFs.

\begin{conjecture}[Improved upper bound DNF compression]
\label{conj:upper_bound_DNF_nonmon}
Let $f$ be a width-$w$ DNF. Then for every $\eps>0$ there exists a width-$w$ DNF $f_{upper}$ such that
\begin{enumerate}[(i)]
\item $f(x) \le f_{upper}(x)$ for all $x$.
\item $f_{upper}$ and $f$ are $\eps$-close.
\item $f_{upper}$ has size $(1/\eps)^{O(w)}$.
\end{enumerate}
\end{conjecture}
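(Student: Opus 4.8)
Since \Cref{conj:upper_bound_DNF_nonmon} is stated as a conjecture, I will describe the natural line of attack and the point at which it stalls. The plan is to follow the iterative \emph{robust-sunflower compression} of Rossman~\cite{rossman2010monotone} and Gopalan, Meka and Reingold (GMR)~\cite{gopalan2013dnf}, but to push the size bound from $(w\log(1/\eps))^{O(w)}$ down to $(1/\eps)^{O(w)}$. Write $f = T_1 \vee \cdots \vee T_m$ and view each term $T_i$ as a partial assignment $\rho_i\colon S_i \to \{0,1\}$ with $|S_i|\le w$, so that $x$ satisfies $T_i$ iff $x|_{S_i}=\rho_i$. (One is tempted first to reduce to the monotone case by splitting each variable $x_i$ into a pair $y_i,z_i$ playing the roles of $x_i$ and $\bar x_i$; but the monotone formula then lives on an exponentially small slice of the enlarged cube, so a compression with small error under the uniform measure need not restrict back to one with small error on the slice. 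I would therefore keep the polarities throughout, as in~\cite{gopalan2013dnf}.) The target is a width-$w$ DNF $f_{upper}\ge f$ that is $\eps$-close to $f$ and has at most $(1/\eps)^{O(w)}$ terms.

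The engine is the robust-sunflower merge. Call a sub-collection of at least two distinct terms $T_{i_1},\dots,T_{i_k}$ a \emph{$\delta$-robust sunflower} with core $(\rho,Y)$ if the $\rho_{i_a}$ are pairwise consistent, each $S_{i_a}$ properly contains $Y$, each $\rho_{i_a}$ restricts to $\rho$ on $Y$, and a uniformly random $x$ with $x|_Y=\rho$ satisfies some $T_{i_a}$ with probability at least $1-\delta$. Replacing these terms by the single term defined by $\rho$ on $Y$ yields a DNF $f'\ge f$ --- each $T_{i_a}$ implies it --- that differs from $f$ only on $\{x : x|_Y=\rho,\ f(x)=0\}$, a set of measure at most $2^{-|Y|}\delta$. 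Iterating this merge until no $\delta$-robust sunflower remains, and controlling the accumulated error by the restriction/bookkeeping argument of GMR (organizing the merges at the leaves of a restriction tree with disjoint leaf-subcubes, so that the total error telescopes to $O(\delta)$), the surviving DNF has --- modulo the now-standard overhead needed to pass to pairwise-consistent sub-families of terms --- at most $2^{O(w)}\cdot K(w,\delta)$ terms, where $K(w,\delta)$ is the least $N$ such that every $w$-set system of size $N$ contains a $\delta$-robust sunflower. Choosing $\delta=\Theta(\eps)$ and absorbing the $2^{O(w)}$ factor into $(1/\eps)^{O(w)}$ (we may assume $\eps\le 1/2$), the size of $f_{upper}$ is at most $(1/\eps)^{O(w)}\cdot K(w,\Theta(\eps))$.

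Everything above is essentially prior work; the genuinely hard part --- and the reason \Cref{conj:upper_bound_DNF_nonmon} is only a conjecture --- is the quantitative input it demands: a robust-sunflower lemma $K(w,\delta)\le (1/\delta)^{O(w)}$, i.e.\ with the base of the exponential a polynomial in $1/\delta$ and \emph{no} dependence on $w$. This lies beyond all current bounds: Erd\H{o}s--Rado~\cite{ErdosR1960} gives $K\approx w^w$, Rossman's quasi-sunflower lemma gives a $(w\log(1/\delta))^{O(w)}$-type bound (which is exactly what produces the GMR size), and even recent improvements only lower the base to $(\log w)^{O(w)}$-type --- removing the remaining $w$-dependent factor in the base is an improvement of the same character as the sunflower conjecture, open for sixty years. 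Moreover, this paper shows that improved DNF compression in turn improves the best known bounds for the sunflower conjecture, so one should not expect \Cref{conj:upper_bound_DNF_nonmon} to yield to a soft argument. The only alternative route I see is to bypass sunflowers and attack the term system directly by a structure-versus-pseudorandomness / regularity decomposition, in the spirit of this paper: partition the terms into a bounded number of groups, each either contained in a single core or, after passing to a suitable restriction, ``pseudorandom'' enough that its terms already accept all but a $\delta$-fraction of that subcube, so that the whole group can be collapsed to the core in one step. Making such a regularity notion quantitatively strong enough --- and compatible with the polarity structure --- to bring the DNF down to $(1/\eps)^{O(w)}$ terms is precisely the gap between this conjecture and the reverse implication that the paper establishes.
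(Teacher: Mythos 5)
This statement is a conjecture, not a theorem; the paper offers no proof and uses it (via its monotone variant, \Cref{conj:upper_bound_DNF}) purely as a hypothesis from which to derive improved sunflower bounds. You correctly identify this, and your account of the obstruction is accurate: the GMR/Rossman compression machinery would yield $(1/\eps)^{O(w)}$ terms if one had a robust-sunflower lemma whose threshold is polynomial in $1/\delta$ with base independent of $w$, and no known bound --- Erd\H{o}s--Rado, Rossman's $(w\log(1/\delta))^{O(w)}$, or the more recent $(\log w)^{O(w)}$-type improvements --- achieves that. Your caveat about the non-monotone-to-monotone reduction is also fair (the paper states the monotone version as a separate conjecture for exactly this reason), and your closing observation that the paper's main theorem shows such a compression would in turn imply stronger sunflower bounds is the right way to see why this is genuinely open rather than a gap to be filled by a softer argument.
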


To study the connection between DNF compression and sunflowers, we would need an analog of \Cref{conj:upper_bound_DNF_nonmon} for monotone DNFs.

\begin{conjecture}[Improved upper bound monotone DNF compression]
\label{conj:upper_bound_DNF}
In \Cref{conj:upper_bound_DNF_nonmon}, if $f$ is a monotone DNF, then $f_{upper}$ can also be taken to be a monotone DNF.
\end{conjecture}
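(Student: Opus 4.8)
The plan is to attack \Cref{conj:upper_bound_DNF} through the same structure-versus-pseudorandomness (``regularity'') lens that drives this paper, by specializing the Gopalan--Meka--Reingold sunflower-collapsing strategy to monotone DNFs, where it stays within the class of monotone width-$w$ DNFs and only ever increases the function. Identify the monotone width-$w$ DNF $f$ with the $w$-set system $\FF=\{S : \bigwedge_{i\in S}x_i\text{ is a term of }f\}$, so that the accepting set of $f$ is the up-set $\bigcup_{S\in\FF}\{x : S\subseteq x\}$. We want a $w$-set system $\mathcal G$ whose up-set contains that of $\FF$, with $|\mathcal G|\le (C/\eps)^w$, and whose extra mass $\Pr_x[\,x\supseteq G\text{ for some }G\in\mathcal G;\ x\not\supseteq S\text{ for all }S\in\FF\,]$ is at most $\eps$. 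Then $f_{upper}=\bigvee_{G\in\mathcal G}\bigwedge_{i\in G}x_i$ is automatically a monotone width-$w$ DNF with $f\le f_{upper}$, so the conjecture reduces to producing such a $\mathcal G$.

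First I would prove a regularity lemma for set systems: the maximal-width part of $\FF$ can be partitioned into pieces $\FF^{(1)},\dots,\FF^{(P)}$ so that each $\FF^{(j)}$ consists of sets containing a common core $Y_j$ while the residual family $\{S\setminus Y_j : S\in\FF^{(j)}\}$ is $\kappa$-spread (no nonempty set lies in more than a $\kappa^{-(\cdot)}$ fraction of it), with $P\le\kappa^{O(w)}$; this follows by repeatedly splitting off the most popular sub-core of any not-yet-spread piece, together with a potential-function argument bounding the number of splits. Next, replace each piece by its core, i.e.\ discard $\FF^{(j)}$ and add the single set $Y_j$: since $Y_j\subseteq S$ for all $S\in\FF^{(j)}$ this only enlarges the up-set, and it keeps the function a monotone DNF of width $\le w$. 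By the spread-implies-cover lemma of Rossman and of Alweiss, Lovett, Wu and Zhang (a $\kappa$-spread $w$-bounded family with $\kappa\ge C\log(w/\eps')$ is contained in a uniformly random subset of $[n]$ with probability $\ge 1-\eps'$), the false positives contributed by piece $j$ have mass at most $2^{-|Y_j|}\eps'$. The new cores $Y_j$ have width $<w$ (a singleton piece $\{S\}$ has an empty residual and can simply be carried along verbatim), so the output is a monotone DNF all of whose new minterms have width $<w$; now recurse on width with an error budget of $\eps/w$ per level. Unrolling gives $f_{upper}$ as a monotone width-$w$ DNF of size $\sum_{k\le w}P_k\le w\,\kappa_{\max}^{O(w)}$, with $\kappa_{\max}$ the largest spread parameter used, and the accumulated error, summed over at most $\sum_j 2^{-|Y_j|}\le P$ pieces at each level, stays below $\eps$ provided each $\eps'$ is at most $\eps$ divided by the relevant $P$.

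The hard part — and, I suspect, the reason \Cref{conj:upper_bound_DNF} is still open while its lower-bound counterpart \Cref{thm:lower_bound_DNF} is a theorem — is the tension between two quantities pulling in opposite directions: the regularity decomposition produces $P=\kappa^{\Theta(w)}$ pieces, the spread-to-cover step only applies once $\kappa\gtrsim\log(w/\eps')$, and the false-positive budget forces $\eps'\lesssim\eps/P$. Resolving this circularity crudely yields $\kappa_{\max}=\Theta(w\log w+\log(1/\eps))$ and hence $|f_{upper}|=(w\log(1/\eps))^{O(w)}$ — exactly the Gopalan--Meka--Reingold bound of \Cref{thm:GMR}, with no improvement. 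The genuinely new ingredient needed is a way to keep the accumulated false-positive mass small \emph{without} paying $\eps'\le\eps/P$: for instance, a decomposition that bounds $\sum_j 2^{-|Y_j|}$ by $\mathrm{poly}(1/\eps)$ rather than by the number of pieces, or a sharper spread-implies-cover statement whose error decays fast enough in the core size to absorb the $2^{-|Y_j|}$ losses over all pieces at once, so that $\kappa$ can be kept $\mathrm{poly}(1/\eps)$-bounded independently of $w$ and the final size collapses to $(1/\eps)^{O(w)}$. This is precisely the upper-bound analogue of the mechanism by which \Cref{thm:lower_bound_DNF} removed the $w^{O(w)}$ factor on the lower-bound side — there one only \emph{deletes} terms, so the accepting set shrinks and no such blow-up arises — and transporting that mechanism to the setting where one must \emph{add} new core terms is the main obstacle. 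Monotonicity itself is never at risk: peeling cores, replacing spread pieces by their cores, and recursing all keep the function a monotone width-$w$ DNF dominating $f$, so any proof of \Cref{conj:upper_bound_DNF_nonmon} of this regularity-based shape delivers \Cref{conj:upper_bound_DNF} at no extra cost.
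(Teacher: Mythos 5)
The statement you set out to prove is presented in the paper as \Cref{conj:upper_bound_DNF}, i.e.\ as an open \emph{conjecture}: the paper never proves it, and the whole of \Cref{thm:main} is \emph{conditional} on it (in fact on the slightly weaker \Cref{conj:upper_bound_weak}). So there is no proof in the paper to compare your proposal against, and your proposal itself --- correctly --- does not claim to close the gap. You lay out the natural route (spread/regularity decomposition of the top-width layer into cored pieces, replace each spread piece by its core, recurse on width with a per-level error budget), observe that the crude accounting with $P=\kappa^{\Theta(w)}$ pieces and budget $\eps'\lesssim\eps/P$ forces $\kappa_{\max}=\Theta(w\log w+\log(1/\eps))$ and hence only recovers the Gopalan--Meka--Reingold bound $(w\log(1/\eps))^{O(w)}$ of \Cref{thm:GMR}, and you then explicitly flag that the missing ingredient --- some way to charge the accumulated false-positive mass against $\sum_j 2^{-|Y_j|}$ rather than against the piece count --- is what remains open. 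That diagnosis is accurate, and it applies equally to \Cref{conj:upper_bound_weak}: relaxing to $((\log w)/\eps)^{O(w)}$ only helps with the $\eps$-dependence, not with the $w\log w$ that your crude resolution of the circularity still leaves in the base, so the obstacle you name blocks the weak form too.

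Your closing observation --- that peeling cores, replacing spread residuals by their cores, and recursing never leave the class of monotone width-$w$ DNFs dominating $f$, so any regularity-shaped proof of \Cref{conj:upper_bound_DNF_nonmon} delivers \Cref{conj:upper_bound_DNF} for free --- is essentially the one substantive thing the paper says about this conjecture (``the proof of \cite{gopalan2013dnf} is also true for monotone DNF compression''), and the paper sharpens it a little in \Cref{sec:dnfs_set_systems}: for monotone DNFs any upper bound is automatically a \emph{proper} upper bound, i.e.\ every term of $f$ contains some term of $f_{upper}$, which is exactly the form your core-replacement construction produces. In short, nothing in your sketch is wrong, but by your own account it is a proof \emph{plan} with an unsolved step, which is precisely the status of the statement in the paper.
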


The main result of this paper is that \Cref{conj:upper_bound_DNF} implies an improved bound for the sunflower conjecture, with a bound of $(\log w)^{O(w)}$ instead of the current bound of $O(w)^{w}$. Thus, the connection between sunflower theorems and DNF compression goes both ways. We note that the proof of \cite{gopalan2013dnf} is also true for monotone DNF compression.

To simplify the presentation, we assume from now on that $w \ge 2$. This will allow us to assume that $\log w >0$. In any case, for $w=1$ the sunflower conjecture is trivial, as any $1$-set system of size $r$ is an $r$-sunflower.

\begin{theorem}[Main theorem]
\label{thm:main}
Assume that \Cref{conj:upper_bound_DNF} holds. Then for any $r \ge 3$ there exists a constant $c_r$ such that the following holds. Any $w$-set system $\FF$ of size $|\FF| \ge (\log w)^{c_r w}$ contains an $r$-sunflower.
\end{theorem}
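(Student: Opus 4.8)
I would run a strong induction on $w$, proving $|\FF|\ge(\log w)^{c_r w}\Rightarrow\FF$ has an $r$-sunflower for a large constant $c_r=c_r(r)$ (for bounded $w$ this follows from the Erd\H{o}s--Rado bound). Fix an auxiliary spread parameter $K=(\log w)^{c_1}$ with $1\ll c_1\ll c_r$, and first apply the standard structure/pseudorandomness split: either some nonempty $T$ has $|\FF_T|>K^{-|T|}|\FF|$ for $\FF_T:=\{S\in\FF:T\subseteq S\}$ (note $|T|<w$), in which case I pass to the link $\{S\setminus T:S\in\FF_T\}$, a $(w-|T|)$-set system still of size $\ge(\log(w-|T|))^{c_r(w-|T|)}$ because the loss $K^{|T|}$ is only polylogarithmic, apply induction, and adjoin $T$ to the core it returns; or $\FF$ is \emph{$K$-spread}. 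So it remains to find an $r$-sunflower in a $K$-spread $w$-set system with $|\FF|\ge(\log w)^{c_r w}$ --- this reduction is what pins the base $\log w$ in the conclusion.

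For the spread case I would invoke \Cref{conj:upper_bound_DNF}. Let $f=\bigvee_{S\in\FF}\bigwedge_{i\in S}x_i$, a monotone width-$w$ DNF. To reason about $f$ at a polylogarithmically small bias, substitute each $x_i$ by an AND of $k=\Theta(\log\log w)$ fresh variables, obtaining a monotone DNF $\tilde f$ of width $wk$ with $\Pr_{\mathrm{unif}}[\tilde f=1]=\Pr_{\mu_p}[f=1]$ for $p=2^{-k}=1/\mathrm{polylog}(w)$. Applying \Cref{conj:upper_bound_DNF} to $\tilde f$ with a small constant $\eps$ gives a monotone $\tilde f_{upper}\ge\tilde f$, $\eps$-close, of size $(1/\eps)^{O(wk)}=2^{O(w\log\log w)}=(\log w)^{O(w)}$. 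Two features are essential here: the bound has base $\mathrm{poly}(1/\eps)$ and exponent $O(\mathrm{width})$ (the form $(w\log(1/\eps))^{O(w)}$ of \Cref{thm:GMR} would balloon to $w^{O(w\log\log w)}$ after the substitution), and it is an \emph{upper} bound DNF, not just a lower bound as in \Cref{thm:lower_bound_DNF}, so that its terms \emph{cover} $\FF$. Contracting each term of $\tilde f_{upper}$ to its set of first coordinates yields a $w$-set system $\mathcal{G}$ with $|\mathcal{G}|\le(\log w)^{O(w)}\ll|\FF|$ (taking $c_r$ large) in which every $S\in\FF$ contains some member.

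The final step is to turn the cover $\mathcal{G}$, together with the regularity of $\FF$, into an $r$-sunflower. Partition $\FF$ by which member of $\mathcal{G}$ it contains and split $\mathcal{G}$ into sets of size $\ge\delta w$ and $<\delta w$ for a small constant $\delta$. If a constant fraction of $\FF$ is covered by the large sets, some $G$ with $|G|\ge\delta w$ has $|\FF_G|\ge|\FF|/(2|\mathcal{G}|)$; its link has width $\le(1-\delta)w$ and, since $|\mathcal{G}|\le(\log w)^{O(w)}$ is affordable, still clears the induction threshold once $c_r$ is large relative to $\delta$ and the compression constant --- so induction finishes, and since the width drops by a constant factor there are only $O(\log w)$ such rounds and the exponents sum geometrically, leaving the bound at $(\log w)^{O_r(w)}$. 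Otherwise most of $\FF$ is covered by sets of width $<\delta w$; I would argue that regularity of $\FF$, transported through this cover, forces $f$ to be pseudorandom in the sense that a $p$-biased subset of the ground set contains a member of $\FF$ with probability close to $1$, and then sampling $r$ pairwise disjoint $p$-biased sets (restrict to $r$ random blocks of the ground set) and picking a member of $\FF$ inside each yields $r$ pairwise disjoint sets --- an $r$-sunflower with empty core.

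I expect this last case to be the main obstacle: making precise how $K$-spreadness of $\FF$, filtered through the compression-generated cover, forces the pseudorandom behavior of $f$. This is where the ``regularity of set systems'' and ``structure-vs-pseudorandomness'' viewpoints from the introduction should do the real work; everything else is bookkeeping of the constants $c_1,\delta,c_r$ against the two recursions.
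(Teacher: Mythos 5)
Your plan captures the right high-level shape — structure-vs-pseudorandomness reduction, using the compression conjecture to produce a small covering set system, and finding disjoint sets via random coloring/biased sampling — but it stops exactly at the point where the paper does its real work, and one of your foundational definitions is subtly wrong for the purpose.

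First, the paper does not use uniform-counting spreadness (``no $T$ has $|\FF_T|>K^{-|T|}|\FF|$''). It uses a \emph{distributional} notion: $\FF$ is $\kappa$-regular if there \emph{exists} a distribution $\D$ supported on $\FF$ with $\Pr_{S\sim\D}[T\subseteq S]\le\kappa^{-|T|}$ for all $T$. This distinction is load-bearing. The paper's \Cref{claim:regular_upper} shows a proper upper-bound set system (the cover $\mathcal{G}$ you get from compression) inherits $\kappa$-regularity by pushing $\D$ forward along the map $S\mapsto\varphi(S)\subseteq S$. Uniform spreadness of $\FF$ does \emph{not} transport to $\mathcal{G}$ in any comparable way, so your ``regularity of $\FF$, transported through this cover'' step has no justification as written; you would need to switch to the distributional definition from the start, at which point you are already reconstructing the paper's Section 4.

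Second, the step you flag as ``the main obstacle'' — turning spreadness plus a mostly-small cover into a satisfying/pseudorandomness guarantee — is precisely \Cref{lemma:satisfy}, and it is not bookkeeping. The paper's proof does \emph{not} compress $f$ once with a constant $\eps$; it runs a recursion on width, at each level compressing only the \emph{large-set half} $\FF_1=\{S:|S|\ge w/2\}$ with error $\gamma=\eps/\log w$, replacing it by its compression $\FF_2$, then using the pushed-forward regular distribution to show the resulting large sets carry total mass $\le 1/\kappa$ under $\D$ (because there are only $((\log w)/\eps)^{O(w)}$ of them, each with $\D$-mass $\le\kappa^{-w/2}$), discarding them via \Cref{claim:regular_lower}, and recursing on a $(w/2)$-set system with the error budget telescoping: $\eps(1-1/\log w)+\eps/\log w=\eps$. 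None of this is present in your plan; your single application of the conjecture followed by the large/small dichotomy on $\mathcal{G}$ does not give it, since you have no handle on how the mass of $\FF$ distributes over the small part of $\mathcal{G}$. Also, your bias-substitution trick ($x_i\to$ AND of $k$ fresh variables) is clever but unnecessary — the paper works entirely at $p=1/2$ and only proves $(1/2,\eps)$-satisfying.

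Third, you try to get $r$ pairwise disjoint sets directly by going to $p=1/r$ bias, i.e.\ proving $(1/r,1/r)$-satisfying. The paper deliberately avoids this and only proves the $r=2$ case (giving an upper bound on $\beta(w)=\alpha(w,2)$), then bootstraps to general $r$ via a separate iterative argument (\Cref{lemma:alpha_beta}: $\alpha(w,r)\le r2^{r+1}\beta(wr)^r$, proved by repeatedly extracting $r$-tuples of disjoint sets from a regular distribution and observing that their unions form an intersecting $(wr)$-set system, whose non-regularity then forces a heavy $T^*$ contradicting the regularity of $\D$). Your route is not obviously wrong, but it is a harder target and you have not given an argument for it. Finally, your outer structure-case induction is fine, but the base of the induction needs care: for small $w$ the bound $(\log w)^{c_rw}$ degenerates, which the paper handles via explicit base cases (\Cref{claim:regular_satisfying_base}) rather than citing Erdős--Rado.
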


In fact, \Cref{thm:main} holds even with a slightly weaker conjecture instead of \Cref{conj:upper_bound_DNF}, where
the size bound can be assumed to be $((\log w) / \eps)^{O(w)}$ instead of $(1/\eps)^{O(w)}$.

\subsection{Proof overview}

The proof of Erd\H{o}s and Rado \cite{ErdosR1960} is by a simple case analysis which we now recall. Let $\FF$ be a $w$-set system. Then either $\FF$ contains $r$ pairwise disjoint sets, which are in particular an $r$-sunflower; or at most $r-1$ sets whose union intersects all other sets. In the latter case, there is an element that belongs to a $\frac{1}{(r-1)w}$ fraction of the sets in $\FF$. If we restrict to these sets, and remove the common element, then we reduced the problem to a $(w-1)$-set system of size $\frac{|\FF|}{(r-1)w}$. The proof concludes by induction.

Our approach is to refine this via a structure-vs-pseudorandomness approach. Either there is a set $T$ of elements that belong to many sets in $\FF$ (concretely, at least $|\FF|/\kappa^{|T|}$, for an appropriately chosen $\kappa$), or otherwise the set system $\FF$ is pseudo-random, in the sense that no set $T$ is contained in too many sets in $\FF$. The main challenge is showing that by choosing $\kappa$ large enough, this notion of pseudo-randomness is useful. This will involve introducing several new concepts and tying them to the sunflower problem.

The following proof overview follows the same structure as the sections in the paper, to ease readability.

\paragraph{\Cref{sec:dnfs_set_systems}: DNFs and set systems.}
First, we note that set systems are one-to-one correspondence to monotone DNFs.
Formally, we identify a set system $\FF=\{S_1,\ldots,S_m\}$ with the monotone DNF
$f_{\FF}(x) = \bigvee_{S \in \FF} \bigwedge_{i \in S} x_i$. This equivalence will be useful in the proof, as at different stages one of these viewpoints is more convenient.

The notions of ``lower bound DNF" $f_{lower}$ and ``upper bound DNF" $f_{upper}$ used in
\Cref{thm:GMR}, \Cref{thm:lower_bound_DNF} and \Cref{conj:upper_bound_DNF} have analogs for
set systems, which we refer to as \emph{proper} lower bound and upper bound DNFs (or set systems). For the purpose of this high level overview, we ignore this distinction here.

\paragraph{\Cref{sec:approx_sunflowers}: Approximate sunflowers.}
The notion of approximate sunflowers was introduced by Rossman \cite{rossman2010monotone}. It relies on the notion of \emph{satisfying set systems}.

Let $\FF$ be a set system on a universe $X$. We say that $\FF$ is \emph{$(p,\eps)$-satisfying} if $\Pr_{x \sim X_p}[f_{\FF}(x)=1] > 1-\eps$,
where $f_{\FF}$ is the corresponding monotone DNF for $\FF$, and $X_p$ is the $p$-biased distribution on $X$. The importance of satisfying set systems in our context is that a $(1/r,1/r)$-satisfying set system contains $r$ pairwise disjoint sets (\Cref{claim:satisfying_contains_disjoint}).

Let $K = \cap_{S \in \FF} S$ be the intersection of all sets in $\FF$. We say that $\FF$ is a \emph{$(p,\eps)$-approximate sunflower} if the set system $\{S \setminus K: S \in \FF\}$ is $(p,\eps)$-satisfying. An interesting connection between approximate sunflowers and sunflowers is that a $(1/r,1/r)$-approximate sunflower contains an  $r$-sunflower (\Cref{cor:approx_contains_sunflower}).

\paragraph{\Cref{sec:regular}: Regular set systems.}
Let $\D$ be a distribution over subsets of $X$. We say that $\D$ is \emph{regular} if when sampling $S \sim \D$, the probability that $S$ contains any given set $T$ is exponentially small in the size of $T$. Formally, $\D$ is $\kappa$-regular if for any set $T \subseteq X$ it holds that $\Pr_{S \sim \D}[T \subseteq S]  \le \kappa^{-|T|}$.

A set system $\FF$ is $\kappa$-regular if there exists a $\kappa$-regular distribution supported on sets in $\FF$. We show that if $\FF$ is $\kappa$-regular, then the same holds for any upper bound set system (\Cref{claim:regular_upper}) and any ``large enough" lower bound set system (\Cref{claim:regular_lower}). These facts will turn out to be useful later.

\paragraph{\Cref{sec:regular_satisfying}: Regular set systems are $(1/2,1/2)$-satisfying.}
In this section, we focus on regular set systems $\FF$, or equivalently regular DNFs $f=f_{\FF}$.
We show that, assuming \Cref{conj:upper_bound_DNF} (or the slightly weaker \Cref{conj:upper_bound_weak}), any $\kappa$-regular DNF of width $w$,
where $\kappa = (\log w)^{O(1)}$, is $(1/2,1/2)$-satisfying. Namely, $\Pr[f(x)=1] \ge 1/2$,
where $x$ is uniformly chosen. In particular, this implies that $\FF$ contains two disjoint sets. However, our goal is to prove that $\FF$ contains an $r$-sunflower for $r \ge 3$, so we are not done yet.

\paragraph{\Cref{sec:intersecting}: Intersecting regular set systems.}
Let $\alpha(w,r)$ denote the maximal $\kappa$ such that there exists a $\kappa$-regular $w$-set system without $r$ pairwise disjoint sets. It is easy to prove that the sunflower theorem holds for any set system of size $|\FF| > \alpha(w,r)^w$ (\Cref{claim:alpha_w_r_sunflower}). However, our discussion so far only allows us to bound $\beta(w)=\alpha(w,2)$; concretely, assuming \Cref{conj:upper_bound_DNF} we have $\beta(w) \le (\log w)^{O(1)}$.

We show (\Cref{lemma:alpha_beta}) that nontrivial upper bounds on $\beta(w)$ imply related upper bounds on $\alpha(w,r)$ for every $r$. Concretely, if $\beta(w) \le (\log w)^{O(1)}$ then
$\alpha(w,r) \le (\log w)^{c_r}$ where $c_r>0$ are constants. This concludes the proof, as we get that any $w$-set system of size $|\FF| \ge (\log w)^{c_r w}$ must contain an $r$-sunflower.

\paragraph{Acknowledgements.} We thank Ray Li for spotting a subtle mistake in the previous version (its solution necessitated restricting some claims to non-trivial or non-redundant set systems). We also thank CCC reviewers for pointing out some early mistakes.

\section{DNFs and set systems}
\label{sec:dnfs_set_systems}

A DNF is \emph{monotone} if it contains no negated variables. Monotone DNFs are in one-to-one correspondence with set systems. Formally, if $\FF$ is a set system then the corresponding monotone DNF is
$$
f_{\FF}(x) = \bigvee_{S \in \FF} \bigwedge_{i \in S} x_i.
$$
In the other direction, if $f=\bigvee_{j \in [m]} \bigwedge_{i \in S_j} x_i$ is a monotone DNF then its corresponding set system is
$$
\FF_{f} = \{S_1,\ldots,S_m\}.
$$
Observe that a $w$-set system corresponds to a width-$w$ monotone DNF, and vice versa. If $X$ is the set of elements over which
$\FF$ is defined then we write $\FF \subseteq \PP(X)$.

A DNF is \emph{non-redundant} if no term implies another term. A DNF is \emph{non-trivial} if it is not a constant function. This motivates the following definitions for the corresponding set systems.

\begin{definition}[Non-redundant set systems]
A set system $\FF$ is \emph{non-redundant} if it is an anti-chain. Namely, it does not contain two distinct sets $S_1,S_2$ with $S_1 \subset S_2$.
\end{definition}

\begin{definition}[Non-trivial set systems]
A set system $\FF$ is \emph{non-trivial} if it is not empty, and doesn't contain the empty set.
\end{definition}

To recall, we consider both lower bound and upper bound DNFs. As our main motivation is to better understand sunflowers, we
restrict attention to monotone DNFs from now on; however, all the definitions can be easily adapted for general DNFs.

We next define \emph{proper} upper and lower bound DNFs. Proper lower bound DNFs are obtained by removing terms from the DNF, and
proper upper bound DNFs are obtained by removing variables from terms in the DNF.
We describe both in terms of the corresponding set systems.

\begin{definition}[Proper lower bound DNF / set system]
Let $\FF$ be a set system. A \emph{proper lower bound set system} for $\FF$ is simply a sub set system $\FF' \subseteq \FF$.
Observe that indeed
$$
f_{\FF'}(x) \le f_{\FF}(x) \qquad \forall x.
$$
\end{definition}

\begin{definition}[Proper upper bound DNF / set system]
Let $\FF$ be a set system. A \emph{proper upper bound set system} for $\FF$ is a set system $\FF'$ that satisfies the following: for each $S \in \FF$
there exists $S' \in \FF'$ such that $S' \subseteq S$.
Observe that indeed
$$
f_{\FF'}(x) \ge f_{\FF}(x) \qquad \forall x.
$$
\end{definition}

For monotone DNFs, upper bounds and proper upper bounds are the same.

\begin{claim}
Let $\FF,\FF'$ be set systems over the same universe, such that
$$
f_{\FF'}(x) \ge f_{\FF}(x) \qquad \forall x.
$$
Then $\FF'$ is a proper upper bound set for $\FF$.
\end{claim}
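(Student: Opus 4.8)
The plan is to show that for every set $S \in \FF$, there is some $S' \in \FF'$ with $S' \subseteq S$, which is exactly the definition of a proper upper bound set system. The natural way to do this is to evaluate both DNFs on the indicator input $x = \mathbf{1}_S$, i.e. the characteristic vector of $S$. Since $S \in \FF$, the term $\bigwedge_{i \in S} x_i$ is satisfied by $x = \mathbf{1}_S$, so $f_{\FF}(\mathbf{1}_S) = 1$. By the hypothesis $f_{\FF'}(\mathbf{1}_S) \ge f_{\FF}(\mathbf{1}_S) = 1$, so some term of $f_{\FF'}$ is satisfied by $\mathbf{1}_S$; that is, there exists $S' \in \FF'$ with $\bigwedge_{i \in S'} (\mathbf{1}_S)_i = 1$, which means every $i \in S'$ satisfies $i \in S$, i.e. $S' \subseteq S$.

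**Carrying this out,** the steps in order are: (1) Fix an arbitrary $S \in \FF$ and set $x := \mathbf{1}_S \in \{0,1\}^X$. (2) Observe $f_{\FF}(x) = 1$ because the disjunct corresponding to $S$ evaluates to $1$ on $x$ (all literals $x_i$ for $i \in S$ are set to $1$). (3) Apply the hypothesis pointwise at this $x$ to conclude $f_{\FF'}(x) = 1$. (4) Unpack what $f_{\FF'}(x) = 1$ means: there is a disjunct of $f_{\FF'}$, corresponding to some set $S' \in \FF'$, with $\bigwedge_{i \in S'} x_i = 1$, hence $x_i = 1$ for all $i \in S'$, hence $S' \subseteq S$ by definition of $x = \mathbf{1}_S$. (5) Since $S$ was arbitrary, this establishes the defining property of a proper upper bound set system.

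**The only point requiring a little care** is that both set systems are over the same universe $X$, so that the input $\mathbf{1}_S$ is a legitimate input to $f_{\FF'}$ and the coordinates referenced by terms of $f_{\FF'}$ are among those set by $\mathbf{1}_S$ — this is exactly why the claim is stated with the ``over the same universe'' hypothesis, and it is used silently in step (4). There is no real obstacle here; the argument is a one-line unwinding of the definitions, and the substance of the claim is precisely the observation that for monotone functions it suffices to test the implication $f_{\FF} \le f_{\FF'}$ on the minimal satisfying inputs $\mathbf{1}_S$, $S \in \FF$.
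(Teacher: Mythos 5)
Your proof is correct and is essentially the same argument as the paper's: both evaluate at the indicator vector $x = \mathbf{1}_S$ for an arbitrary $S \in \FF$ and unpack what $f_{\FF'}(\mathbf{1}_S)$ being $1$ (or, in the paper's contrapositive phrasing, $0$) means for the terms of $\FF'$. The only cosmetic difference is that you argue directly while the paper argues by contradiction.
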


\begin{proof}
Assume not.
Then there exists $S \in \FF$ such that there is no $S' \in \FF'$ with $S' \subseteq S$. Let $x=1_S$ be the indicator vector for $S$. Then $f_{\FF}(x)=1$ but $f_{\FF'}(x)=0$, a contradiction.
\end{proof}

\begin{corollary}
In \Cref{conj:upper_bound_DNF}, we may assume that $f_{upper}$ is a proper upper bound DNF for $f$.
\end{corollary}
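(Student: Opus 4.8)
The plan is to invoke the Claim above directly, with essentially no additional work. Let $f$ be a monotone width-$w$ DNF on the variables $x_1,\ldots,x_n$, and let $f_{upper}$ be the monotone width-$w$ DNF guaranteed by \Cref{conj:upper_bound_DNF}. Since $f_{upper}$ is also a DNF on the same variables $x_1,\ldots,x_n$, the associated set systems $\FF_f$ and $\FF_{f_{upper}}$ both live over the universe $[n]$, and under the monotone DNF $\leftrightarrow$ set system correspondence of \Cref{sec:dnfs_set_systems} the pointwise inequality $f(x) \le f_{upper}(x)$ is exactly $f_{\FF_f}(x) \le f_{\FF_{f_{upper}}}(x)$ for all $x$. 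The Claim above, applied with $\FF = \FF_f$ and $\FF' = \FF_{f_{upper}}$, then gives that $\FF_{f_{upper}}$ is a proper upper bound set system for $\FF_f$; translating back, $f_{upper}$ is a proper upper bound DNF for $f$.

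The remaining points are routine bookkeeping. Properties (ii) and (iii) of \Cref{conj:upper_bound_DNF} --- the $\eps$-closeness and the size bound $(1/\eps)^{O(w)}$ --- continue to hold verbatim, because we have not altered $f_{upper}$; we have merely observed that the object the conjecture already supplies is automatically proper. The width is likewise preserved for free: each term of $f_{upper}$ has at most $w$ literals, so each set of $\FF_{f_{upper}}$ has size at most $w$, i.e.\ $\FF_{f_{upper}}$ is a $w$-set system. If one additionally wishes $f_{upper}$ to correspond to a non-redundant (anti-chain) set system, one may replace $\FF_{f_{upper}}$ by its sub-family of inclusion-minimal sets: this does not change the computed function, hence preserves both the upper bound property and $\eps$-closeness, and it only decreases the size, while staying a proper upper bound since any set witnessing properness still contains a minimal set of the sub-family.

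The only hypothesis of the Claim that one should pause over is that $\FF$ and $\FF'$ be set systems over the same universe, which in DNF language means that $f$ and $f_{upper}$ use the same variable set. This is immediate from the statement of \Cref{conj:upper_bound_DNF}, where both $f$ and $f_{upper}$ are functions $\{0,1\}^n \to \{0,1\}$. I therefore do not expect any genuine obstacle: the corollary is just the Claim above, re-read in the vocabulary of the conjecture.
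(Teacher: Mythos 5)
Your proposal is correct and is exactly what the paper intends: the corollary is stated immediately after the claim precisely because it follows by applying the claim to $\FF_f$ and $\FF_{f_{upper}}$, with no further argument required. Your extra remarks about preserving width, $\eps$-closeness, and optionally passing to inclusion-minimal sets are harmless but unnecessary for what is being asserted.
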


We note that the lower and upper bound DNFs in \cite{gopalan2013dnf} are in fact proper lower and upper bounds, and the same holds for the lower bound DNF in \cite{Lovett2018DNFSB}.

\section{Approximate sunflowers}
\label{sec:approx_sunflowers}

We introduce the notion of \emph{approximate sunflowers}, first defined by Rossman \cite{rossman2010monotone}.
We first need some notation. Given a finite set $X$ and $0<p<1$,
we denote by $X_p$ the \emph{$p$-biased} distribution over $X$, where $W \sim X_p$ is sampled by including each $x \in X$ in $W$ independently with probability $p$. The definition of approximate sunflowers relies on the notion of a \emph{satisfying set system}.

\begin{definition}[Satisfying set system] Let $\FF \subseteq \PP(X)$ be a set system and let $0<p,\eps<1$.
We say that $\FF$ is $(p,\eps)$-satisfying if
$$
\Pr_{W \sim X_p}\left[\exists S \in \FF: S \subseteq W\right]> 1-\eps.
$$
\end{definition}

Equivalently, if $f_{\FF}:\{0,1\}^X \to \{0,1\}$ is the DNF corresponding to $\FF$, then $\FF$ is $(p,\eps)$-satisfying if
$$
\Pr_{x \sim X_p}[f_{\FF}(x)=1] > 1-\eps.
$$

An approximate sunflower is a set system which is satisfying if we first remove the common intersection of all the sets in the set system.

\begin{definition}[Approximate sunflower]
Let $\FF \subseteq \PP(X)$ be a set system and let $0<p,\eps<1$. Let $K = \cap_{S \in \FF} S$. Then $\FF$ is a  $(p,\eps)$-approximate sunflower
if the set system $\{S \setminus K: S \in \FF\}$ is $(p,\eps)$-satisfying.
\end{definition}

Rossman proved an analog of the sunflower theorem for approximate sunflowers. Li, Lovett and Zhang \cite{li2018sunflowers} reproved this theorem by using a connection to randomness extractors.

\begin{theorem}[Approximate sunflower lemma \cite{rossman2010monotone}]
Let $\FF$ be a $w$-set system and let $\eps>0$. If
$|\FF|\geq w!\cdot(1.71\log(1/\eps)/p)^{w}$ then $\FF$ contains a $(p, \eps)$-approximate sunflower.
\end{theorem}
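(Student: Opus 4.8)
The plan is to imitate the Erdős--Rado proof, with ``$r$ pairwise disjoint sets'' replaced by ``enough pairwise disjoint sets to cover a $p$-biased point.'' I argue by induction on $w$, and in fact produce a sub-family $\FF'\subseteq\FF$ together with a set $K$ contained in every member of $\FF'$ such that the petals $\{S\setminus K:S\in\FF'\}$ are pairwise disjoint, nonempty, and numerous enough to be $(p,\eps)$-satisfying; such an $\FF'$ is in particular a $(p,\eps)$-approximate sunflower. The only quantitative input needed about the $p$-biased measure is the following coverage fact: if $P_1,\dots,P_N$ are pairwise disjoint sets of size at most $v$, then the events $\{P_i\subseteq W\}$ for $W\sim X_p$ are independent, so $\Pr_W[\forall i:\,P_i\not\subseteq W]=\prod_i(1-p^{|P_i|})\le(1-p^{v})^{N}$; hence if $N\ge \log(1/\eps)\big/(-\log(1-p^{v}))$ (which is $\le \log(1/\eps)/p^{v}$, and equals $\approx \log(1/\eps)/p$ when $v=1$, the case that produces the constant $1.71$), the family $\{P_1,\dots,P_N\}$ is $(p,\eps)$-satisfying. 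So a sunflower whose $N$ petals are pairwise disjoint of width at most $v$, with $N$ above this threshold, is a $(p,\eps)$-approximate sunflower.

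For the induction, set $d=1.71\log(1/\eps)/p$ and, since $w\ge 1$ in the range of interest, discard the empty set from $\FF$ (this costs nothing). The base case $w=1$: $\FF$ is a set of $\ge d$ singletons, and any $\lceil\log(1/\eps)/(-\log(1-p))\rceil\le d$ of them are pairwise disjoint width-$1$ petals with empty common core, so by the coverage fact $\FF$ already contains a $(p,\eps)$-approximate sunflower. For $w\ge 2$: take a maximal pairwise-disjoint sub-family $D_1,\dots,D_t$ of $\FF$. If $t$ meets the coverage threshold above (with $v=w$), then $\{D_1,\dots,D_t\}$ is itself a $(p,\eps)$-approximate sunflower and we are done. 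Otherwise every member of $\FF$ meets $U=D_1\cup\dots\cup D_t$, a set of size $\le tw$, so by averaging some $e\in U$ lies in $\ge |\FF|/(tw)$ members of $\FF$; restrict to those members and delete $e$ from each, obtaining a width-$(w-1)$ system on which, provided $|\FF|/(tw)\ge (w-1)!\,d^{\,w-1}$, the inductive hypothesis yields a $(p,\eps)$-approximate sunflower $\FF''$, and adjoining $e$ to its core produces one inside $\FF$ (the petals, hence their $(p,\eps)$-satisfiability, are unchanged). Unrolling, one loses at level $k$ a factor $k\cdot t_k$, where $t_k$ is the disjoint-set count demanded there, for a final bound $w!\prod_{k\le w}(k\,t_k)$.

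The main obstacle is matching the claimed bound $w!\,(1.71\log(1/\eps)/p)^{w}$ on the nose. The honest choice $t_k\approx\log(1/\eps)/p^{k}$ — forced if the victory sets at width $k$ may be as wide as $k$ — loses a factor $p^{-1}$ per level \emph{beyond} the $\log(1/\eps)/p$ one wants, which compounds to $p^{-\Theta(w^{2})}$ rather than $p^{-w}$; this already proves a statement of the right shape ($|\FF|\ge w!\cdot(\log(1/\eps)/p)^{O(w^{2})}$ suffices), but not Rossman's sharp form. Getting down to one factor of $\log(1/\eps)/p$ per level requires never having to produce $\log(1/\eps)/p^{w}$ disjoint \emph{full}-width sets: intuitively, a wide petal should only be ``paid for'' against the size that actually survives down to that width, and the cheap width-$1$ threshold should govern most of the recursion. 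Carrying out this balancing carefully is exactly the step where I would follow Rossman's argument; everything else is the Erdős--Rado scaffold above, together with the elementary coverage fact. (One should also be slightly careful about non-triviality — ensuring the petal system never collapses to $\{\emptyset\}$ — which is handled by working throughout with sub-families of size at least $2$ on which the common core is a proper subset of every set.)
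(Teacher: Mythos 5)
The paper does not actually prove this theorem; it is stated as a citation to Rossman~\cite{rossman2010monotone} (a second proof appears in Li--Lovett--Zhang~\cite{li2018sunflowers} via extractors). So there is no ``paper's own proof'' to compare against, and I assess the proposal on its own terms.

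Your scaffold is the right one (Erd\H{o}s--Rado: maximal disjoint family vs.\ pigeonhole on a popular element, then lift through the recursion by re-adding the core element), and your ``coverage fact'' and the observation that adjoining $e$ to the core leaves the petals and hence their $(p,\eps)$-satisfiability unchanged are both correct. The handling of degeneracies (empty sets, size-$\ge 2$ sub-families, proper core) is also fine. But you yourself identify the genuine gap, and it is fatal to proving the statement as written: your only criterion for certifying $(p,\eps)$-satisfiability is the disjoint-packing bound, which forces the width-$k$ threshold $t_k \approx \log(1/\eps)/p^k$. Unrolling, the recursion therefore yields $|\FF| \ge w!\,\big(\log(1/\eps)\big)^{w} p^{-w(w+1)/2}$, i.e.\ a $p^{-\Theta(w^2)}$ loss, not the claimed $p^{-w}$. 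The sentence ``Carrying out this balancing carefully is exactly the step where I would follow Rossman's argument'' is an admission that the central quantitative content of the theorem is not supplied.

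Concretely, the missing idea is a \emph{satisfiability certificate other than disjoint packing}. You cannot hope to always exhibit $\log(1/\eps)/p^w$ pairwise disjoint sets at the top level when the budget per level is only $O(\log(1/\eps)/p)$. Rossman's argument does not push the packing threshold down; instead, when no element is popular (i.e., when the family is ``spread''), the $(p,\eps)$-satisfiability of the family itself is established directly by a different argument (an encoding/entropy-style bound in~\cite{li2018sunflowers}, a coupling/counting argument in Rossman's original). That lemma --- roughly, a spread enough $w$-set system of size $\ge w!\kappa^w$ is already $(p,\eps)$-satisfying once $\kappa \gtrsim \log(1/\eps)/p$ --- is the engine that makes each recursion level cost only one factor of $\log(1/\eps)/p$. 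Without it, what you have proven is a legitimate but strictly weaker theorem (with exponent $\Theta(w^2)$ in $1/p$), not the statement above.
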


To conclude this section, we show that satisfying set systems contain many disjoint sets, and hence approximate sunflowers contain sunflowers.

\begin{claim}
\label{claim:satisfying_contains_disjoint}
Let $\FF$ be a non-trivial set system, $r \ge 2$, and assume that $\FF$ is a $(1/r,1/r)$-satisfying. Then $\FF$ contains $r$ pairwise disjoint sets.
\end{claim}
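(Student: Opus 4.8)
The plan is to use a random coloring argument together with the strictness in the definition of a satisfying set system. First I would sample a uniformly random coloring of the universe: assign to each element $x \in X$ an independent uniform color from $[r]$, and for $i \in [r]$ let $W_i \subseteq X$ be the set of elements colored $i$. By construction $W_1, \ldots, W_r$ are pairwise disjoint, and for each fixed $i$ the marginal distribution of $W_i$ is exactly the $p$-biased distribution $X_p$ with $p = 1/r$, since each element lands in $W_i$ independently with probability $1/r$.

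Next I would apply the hypothesis that $\FF$ is $(1/r,1/r)$-satisfying to each color class. For every fixed $i$, the marginal of $W_i$ being $X_{1/r}$ gives
$$
\Pr\left[\nexists S \in \FF : S \subseteq W_i\right] < \frac{1}{r}.
$$
Taking a union bound over all $r$ colors,
$$
\Pr\left[\exists i \in [r] : \nexists S \in \FF : S \subseteq W_i\right] < r \cdot \frac{1}{r} = 1,
$$
so with positive probability there exist sets $S_1, \ldots, S_r \in \FF$ with $S_i \subseteq W_i$ for every $i$. Fixing such an outcome, the $S_i$ are pairwise disjoint because the $W_i$ are pairwise disjoint.

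Finally I would argue the $S_i$ are distinct, which is where non-triviality of $\FF$ enters: each $S_i$ is nonempty, so if $i \neq j$ then $S_i \subseteq W_i$ and $S_i \cap W_j = \emptyset$ force $S_i \neq S_j$. Hence $S_1, \ldots, S_r$ are $r$ distinct pairwise disjoint sets in $\FF$, as required. The only delicate point is that the union bound is exactly tight at $1$, so the argument genuinely relies on the strict inequality ``$> 1-\eps$'' in the definition of a satisfying set system, rather than a non-strict one; the other mild subtlety is the appeal to non-triviality to guarantee distinctness of the extracted sets. I expect no further obstacle.
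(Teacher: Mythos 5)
Your proposal is correct and follows essentially the same argument as the paper: a uniformly random $r$-coloring, observing each color class has $X_{1/r}$ marginal, a union bound using the strict inequality in the definition of satisfying, and non-triviality to get distinctness. The paper's write-up is nearly identical, including the same remarks about where non-triviality is used.
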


\begin{proof}
Let $\FF \subseteq \PP(X)$. Consider a uniform random coloring of $X$ with $r$ colors.
A coloring induces a partition of $X$ into $X=W_1 \cup \ldots \cup W_r$, where $W_c$ is the set of all elements that attain the color $c$.
Given a color $c \in [r]$, a set $S \in \FF$ is $c$-monochromatic if all its elements attain the color $c$.
Observe that for each color $c$,
$$
\Pr[\exists S \in \FF, \; S \text{ is } c\text{-monochromatic}] = \Pr[\exists S \in \FF, S \subseteq W_c].
$$
The marginal distribution of each $W_c$ is $(1/r)$-biased. By our assumption that $\FF$ is $(1/r,1/r)$-satsifying, the probability that $W_c$ contains some $S \in \FF$
is more than $1-1/r$. So by the union bound,
$$
\Pr[\forall c \in [r] \;\exists S \in \FF, \;S \text{ is } c\text{-monochromatic}] > 0.
$$
In particular, there exists a coloring where this event happens. Let $S_1,\ldots,S_r$ be the sets for which $S_c$ is $c$-monochromatic.
As $\FF$ is non-trivial, $S_1,\ldots,S_r$ are non-empty sets, and hence must be distinct.
Thus $S_1,\ldots,S_r$ must be pairwise disjoint.
\end{proof}

\begin{corollary}
\label{cor:approx_contains_sunflower}
Let $\FF$ be a non-redundant set system with $|\FF| \ge 2$, $r \ge 2$, and assume that $\FF$ is a $(1/r,1/r)$-approximate sunflower. Then $\FF$ contains an $r$-sunflower.
\end{corollary}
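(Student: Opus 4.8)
The plan is to peel off the common core and invoke \Cref{claim:satisfying_contains_disjoint}. Write $K = \cap_{S \in \FF} S$ and set $\FF' = \{S \setminus K : S \in \FF\}$. By the definition of an approximate sunflower, the hypothesis that $\FF$ is a $(1/r,1/r)$-approximate sunflower says exactly that $\FF'$ is $(1/r,1/r)$-satisfying. So once we know $\FF'$ is non-trivial, \Cref{claim:satisfying_contains_disjoint} (with the same $r$) produces $r$ pairwise disjoint sets in $\FF'$, which we write as $S_1 \setminus K, \ldots, S_r \setminus K$ for some $S_1,\ldots,S_r \in \FF$.

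The only place the extra hypotheses are needed is in checking that $\FF'$ is non-trivial, i.e.\ non-empty and not containing $\emptyset$. Non-emptiness is immediate since $\FF$ is non-empty. Suppose for contradiction that $\emptyset \in \FF'$, so $S \setminus K = \emptyset$ for some $S \in \FF$. Since always $K \subseteq S$, this forces $S = K$, and therefore $S \subseteq S'$ for every $S' \in \FF$. As $|\FF| \ge 2$, there is some $S' \in \FF$ with $S' \ne S$, giving $S \subsetneq S'$, which contradicts $\FF$ being an anti-chain. Hence $\FF'$ is non-trivial, and \Cref{claim:satisfying_contains_disjoint} applies.

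Finally I would translate the disjoint petals back to a sunflower in $\FF$. The sets $S_1 \setminus K, \ldots, S_r \setminus K$ are non-empty and pairwise disjoint, hence pairwise distinct, so $S_1,\ldots,S_r$ are $r$ distinct sets of $\FF$. For any $i \ne j$ we have $K \subseteq S_i \cap S_j$, while any element of $S_i \cap S_j$ not in $K$ would lie in $(S_i \setminus K) \cap (S_j \setminus K) = \emptyset$; therefore $S_i \cap S_j = K$ for all $i \ne j$. Thus all pairwise intersections coincide, so $S_1,\ldots,S_r$ form an $r$-sunflower (with core $K$), as desired. The only real obstacle is the non-triviality verification of $\FF'$ above, which is precisely where the anti-chain and $|\FF| \ge 2$ assumptions enter; the rest is routine bookkeeping.
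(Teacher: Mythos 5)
Your proposal is correct and follows exactly the paper's approach: peel off the core $K$, use non-redundancy and $|\FF|\ge 2$ to establish non-triviality of $\FF'$, invoke \Cref{claim:satisfying_contains_disjoint} to obtain $r$ pairwise disjoint petals, and reassemble them into a sunflower with core $K$. The only difference is that you spell out the non-triviality check and the final sunflower verification, which the paper leaves implicit.
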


\begin{proof}
Let $K=\cap_{S \in \FF} S$ and define $\FF' = \{S \setminus K: S \in \FF\}$ which by assumption is $(1/r,1/r)$-satisfying.
As $\FF$ is non-redundant and has at least two elements, $\FF'$ is non-trivial. By \Cref{claim:satisfying_contains_disjoint}
$\FF'$ contains $r$ pairwise disjoint sets $S_1 \setminus K, \ldots, S_r \setminus K$. This implies that $S_1,\ldots,S_r$ form an $r$-sunflower.
\end{proof}

\section{Regular set systems}
\label{sec:regular}

The notion of regularity of a set system is pivotal in this paper. At a high level, a set system is regular if no element
belongs to too many sets, no pair of elements belongs to too many sets, and so on. It is closely related to the notion
of block min-entropy studied in the context of lifting theorems in communication complexity \cite{goos2016rectangles}.

\begin{definition}[Regular distribution]
Let $X$ be a finite set, and let $\D$ be a distribution on non-empty subsets $S \subseteq X$.
The distribution $\D$ is $\kappa$-regular if for any set $T \subseteq X$ it holds that
$$
\Pr_{S \sim \D}[T \subseteq S] \le \kappa^{-|T|}.
$$
\end{definition}

\begin{remark}
Note that we need to restrict $\D$ to be supported on non-empty sets, as otherwise the regularity can be infinite.
\end{remark}

\begin{definition}[Regular set system]
A non-trivial set system $\FF$ is $\kappa$-regular if there exists a $\kappa$-regular distribution $\D$
supported on the sets in $\FF$.
\end{definition}

The following claims show that if $\FF$ is a $\kappa$-regular set system then any proper upper bound set system for it is also
$\kappa$-regular, and any ``large" proper lower bound set system is approximately $\kappa$-regular.

\begin{claim}
\label{claim:regular_upper}
Let $\FF$ be a non-trivial $\kappa$-regular set system. Let $\FF'$ be a non-trivial proper upper bound set system for $\FF$. Then $\FF'$ is also $\kappa$-regular.
\end{claim}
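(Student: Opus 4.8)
The plan is to transport the witnessing regular distribution on $\FF$ forward along the containment map guaranteed by the proper-upper-bound condition. Concretely, let $\D$ be a $\kappa$-regular distribution supported on $\FF$, which exists by hypothesis. Since $\FF'$ is a proper upper bound set system for $\FF$, for every $S \in \FF$ there is at least one $S' \in \FF'$ with $S' \subseteq S$; fix once and for all a choice function $\phi : \FF \to \FF'$ with $\phi(S) \subseteq S$ for all $S$ (arbitrary ties broken arbitrarily). Define $\D'$ to be the pushforward of $\D$ under $\phi$, i.e.\ sample $S \sim \D$ and output $\phi(S)$. Because $\FF'$ is non-trivial it contains no empty set, so $\D'$ is supported on non-empty subsets of $X$ lying in $\FF'$, as required of a regular distribution.

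The key step is the monotonicity estimate. For any $T \subseteq X$,
$$
\Pr_{S' \sim \D'}[T \subseteq S'] = \Pr_{S \sim \D}[T \subseteq \phi(S)] \le \Pr_{S \sim \D}[T \subseteq S] \le \kappa^{-|T|},
$$
where the first inequality uses $\phi(S) \subseteq S$ (so $T \subseteq \phi(S)$ implies $T \subseteq S$) and the last uses $\kappa$-regularity of $\D$. Hence $\D'$ is a $\kappa$-regular distribution supported on $\FF'$, and therefore $\FF'$ is $\kappa$-regular.

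I do not expect a genuine obstacle here; the only point that needs care is bookkeeping about non-triviality, namely that the hypothesis ``$\FF'$ non-trivial'' is exactly what rules out $\phi$ ever producing the empty set (which would make the regularity of $\D'$ ill-defined), and that the definition only requires \emph{existence} of such a distribution, so fixing one arbitrary choice function $\phi$ suffices. No lower bound on $|\FF'|$ or on set sizes is needed for this direction — that subtlety only arises for proper lower bound systems in \Cref{claim:regular_lower}.
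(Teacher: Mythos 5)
Your proof is correct and takes essentially the same approach as the paper: both construct a choice function $\varphi:\FF\to\FF'$ with $\varphi(S)\subseteq S$, push $\D$ forward along it, and use the monotonicity $T\subseteq\varphi(S)\Rightarrow T\subseteq S$ to transfer the regularity bound. Your explicit remark that non-triviality of $\FF'$ is what keeps $\D'$ supported on non-empty sets is a nice clarification but is the same argument.
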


\begin{proof}
Let $\D$ be a $\kappa$-regular distribution supported on $\FF$. Let $\varphi:\FF \to \FF'$ be a map such that $\varphi(S) \subseteq S$
for all $S \in \FF$. Define a distribution $\D'$ on $\FF'$ as follows:
$$
\D'(S') = \sum_{S \in \varphi^{-1}(S')} \D(S).
$$
Then for any set $T$,
$$
\Pr_{S' \sim \D'}[T \subseteq S'] = \sum_{S' \in \FF': T \subseteq S'} \D'(S')
= \sum_{S \in \FF: T \subseteq \varphi(S)} \D(S)
\le \sum_{S \in \FF: T \subseteq S} \D(S) = \Pr_{S \sim \D}[T \subseteq S] \le \kappa^{-|T|}.
$$
\end{proof}

\begin{claim}
\label{claim:regular_lower}
Let $\FF$ be a non-trivial $\kappa$-regular set system, and $\D$ be a $\kappa$-regular distribution supported on $\FF$.
Let $\FF' \subseteq \FF$ be a non-trivial proper lower bound set system for $\FF$, and let $\alpha=\D(\FF')$.
Then $\FF'$ is $(\kappa \alpha)$-regular.
\end{claim}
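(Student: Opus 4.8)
The plan is to let $\D'$ be the conditional distribution of $\D$ restricted to $\FF'$. Since $\FF$ is non-trivial it contains no empty set, hence neither does $\FF' \subseteq \FF$, and $\FF'$ is non-empty by assumption; so, assuming $\alpha = \D(\FF') > 0$ (the case $\alpha = 0$ being vacuous, as any distribution on $\FF'$ is $0$-regular under the convention $0^{-|T|} = \infty$ for $|T| \ge 1$), the distribution $\D'(S) := \D(S)/\alpha$ for $S \in \FF'$ is well defined and supported on non-empty subsets of $X$, as the definition of a regular distribution demands.

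Next I would verify the regularity bound for $\D'$. The case $T = \emptyset$ is immediate: $\Pr_{S \sim \D'}[\emptyset \subseteq S] = 1 = (\kappa\alpha)^{-|T|}$. For $T \neq \emptyset$, the main computation is
$$
\Pr_{S \sim \D'}[T \subseteq S] \;=\; \frac{1}{\alpha}\sum_{S \in \FF' :\, T \subseteq S}\D(S) \;\le\; \frac{1}{\alpha}\sum_{S \in \FF :\, T \subseteq S}\D(S) \;=\; \frac{1}{\alpha}\Pr_{S \sim \D}[T \subseteq S] \;\le\; \frac{\kappa^{-|T|}}{\alpha},
$$
using $\FF' \subseteq \FF$ for the inequality in the middle and the $\kappa$-regularity of $\D$ at the end.

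Finally I would convert $\kappa^{-|T|}/\alpha$ into the target bound $(\kappa\alpha)^{-|T|} = \kappa^{-|T|}\alpha^{-|T|}$. This comes down to the elementary fact that $\alpha^{-1} \le \alpha^{-|T|}$, i.e.\ $\alpha^{|T|} \le \alpha$, which holds since $\alpha \le 1$ and $|T| \ge 1$. Hence $\Pr_{S \sim \D'}[T \subseteq S] \le (\kappa\alpha)^{-|T|}$ for all $T$, so $\D'$ witnesses that $\FF'$ is $(\kappa\alpha)$-regular.

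I do not expect a real obstacle; the only points requiring care are the degenerate $\alpha = 0$ case, and the fact that the naive bound $\kappa^{-|T|}/\alpha$ equals $1/\alpha \ge 1$ when $T = \emptyset$, so that case must be argued by the exact value rather than by the chain of inequalities above. The non-triviality hypotheses on $\FF$ and $\FF'$ are used precisely to ensure $\D'$ is supported on non-empty sets, so that ``$(\kappa\alpha)$-regular'' is meaningful.
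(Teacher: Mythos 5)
Your proof is correct and follows the paper's argument exactly: define $\D'(S) = \alpha^{-1}\D(S)$ on $\FF'$, bound $\Pr_{S\sim\D'}[T\subseteq S] \le \alpha^{-1}\kappa^{-|T|}$, and use $\alpha \le 1$ with $|T|\ge 1$ to conclude $\alpha^{-1}\kappa^{-|T|} \le (\kappa\alpha)^{-|T|}$. The extra remarks about $T=\emptyset$ and $\alpha=0$ are careful but not essential, and the paper handles the former simply by restricting to non-empty $T$.
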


\begin{proof}
Define a distribution $\D'$ on $\FF'$ by $\D'(S) = \alpha^{-1} \D(S)$. Then for any non-empty set $T$,
$$
\Pr_{S \sim \D'}[T \subseteq S] \le \alpha^{-1} \Pr_{S \sim \D}[T \subseteq S] \le \alpha^{-1} \kappa^{-|T|}
\le (\kappa \alpha)^{-|T|}.
$$
\end{proof}

\section{Regular set systems are $(1/2,1/2)$-satisfying}
\label{sec:regular_satisfying}

In this section we use \Cref{conj:upper_bound_DNF} to prove that regular enough DNFs are $(1/2,\eps)$-satisfying, where in light of \Cref{claim:satisfying_contains_disjoint} we care about $\eps=1/2$. To recall the definitions, a DNF $f$ is $(1/2,\eps)$-satisfying if for a uniformly chosen $x$,
$$
\Pr_x[f(x)=1] > 1-\eps.
$$
Define
$$
\gamma(w) = \sup \{\kappa: \exists \text{non-trivial } \kappa \text{-regular } w \text{-set system which is \underline{not} }(1/2,1/2) \text{-satisfying}\}.
$$
In other words, $\gamma(w)$ is the largest value, such that for any $\kappa > \gamma(w)$, if $\FF$ is a nontrivial $\kappa$-regular $w$-set system, then $\Pr_x[f_{\FF}(x)=1] > 1/2$.

We start by giving a lower bound on $\gamma(w)$, where the motivation is to help the reader gain intuition.

\begin{claim}
$\gamma(w) \ge \log w - O(1)$.
\end{claim}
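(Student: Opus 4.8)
The plan is to exhibit an explicit non-trivial $\kappa$-regular $w$-set system that is not $(1/2,1/2)$-satisfying, with $\kappa$ as large as $\lfloor\log w\rfloor$. I would use a transversal design: partition the universe $X$ into $w$ disjoint blocks $B_1,\dots,B_w$, each of size $b:=\lfloor\log w\rfloor$ (recall that we assume $w\ge 2$, so $b\ge 1$), and let $\FF$ be the set system of all \emph{transversals} --- the sets containing exactly one element from each block. Then every member of $\FF$ has size exactly $w$, $|\FF|=b^w$, and $\FF$ is non-trivial.

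To see that $\FF$ is $b$-regular, equip it with the uniform distribution $\D$. For a set $T\subseteq X$: if $T$ contains two elements of a common block then no transversal contains $T$, so $\Pr_{S\sim\D}[T\subseteq S]=0$; otherwise $T$ meets exactly $|T|$ distinct blocks, and since a uniform transversal chooses its element in each block independently and uniformly, $\Pr_{S\sim\D}[T\subseteq S]=b^{-|T|}$. In either case $\Pr_{S\sim\D}[T\subseteq S]\le b^{-|T|}$, so $\D$ is $b$-regular and hence $\FF$ is a $b$-regular set system.

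It remains to check that $\FF$ is not $(1/2,1/2)$-satisfying, i.e.\ that $\Pr_{W\sim X_{1/2}}[\exists S\in\FF:\ S\subseteq W]\le 1/2$. For $W\sim X_{1/2}$, some transversal lies inside $W$ precisely when $W$ meets every block, so
$$
\Pr_{W\sim X_{1/2}}\bigl[\exists S\in\FF:\ S\subseteq W\bigr]=\prod_{i=1}^{w}\Pr\bigl[W\cap B_i\neq\emptyset\bigr]=\bigl(1-2^{-b}\bigr)^{w}\le \exp\!\bigl(-w\cdot 2^{-b}\bigr).
$$
Since $b=\lfloor\log w\rfloor\le\log w$, we get $2^{-b}\ge 1/w$, hence $w\cdot 2^{-b}\ge 1$ and the right-hand side is at most $e^{-1}<1/2$. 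Therefore $\FF$ is a non-trivial $b$-regular $w$-set system that is not $(1/2,1/2)$-satisfying, and so $\gamma(w)\ge b=\lfloor\log w\rfloor\ge\log w-1$.

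I do not expect a genuine obstacle: the construction is essentially forced once one looks for a set system that is regular but still unsatisfiable, and the only quantitative point that needs care is the estimate $(1-2^{-b})^w\le 1/2$ --- it is exactly this inequality that pins the block size at $b=\lfloor\log w\rfloor$ and thereby yields the bound $\log w-O(1)$ rather than a weaker one.
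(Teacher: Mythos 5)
Your construction and analysis are essentially identical to the paper's: both partition the universe into $w$ blocks of size about $\log w$, take the family of transversals, verify $\kappa$-regularity of the uniform distribution, and bound $\Pr[\exists S\subseteq W]=(1-2^{-b})^w$ to show the system is not $(1/2,1/2)$-satisfying. Your version is slightly cleaner in that you explicitly take $b=\lfloor\log w\rfloor$ (an integer block size) and work out the constant $e^{-1}<1/2$, whereas the paper leaves the block size as $\log w - c$ for an unspecified constant $c$; this is a cosmetic difference, not a different approach.
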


\begin{proof}
We construct a non-trivial $\kappa$-regular $w$-set system which is not $(1/2,1/2)$-satisfying, for $\kappa=\log w - O(1)$.
Let $X_1,\ldots,X_w$ be disjoint sets, each of size $\kappa=\log w-c$ for a constant $c>0$ to be determined. Let $X=X_1 \cup \ldots \cup X_w$.
Let $\FF \subseteq \PP(X)$ be the $w$-set system of all sets $S$ that contain exactly one element from each set $X_i$. It is simple to verify that the uniform distribution over $\FF$ is $\kappa$-regular, and hence $\FF$ is $\kappa$-regular. Let $W \sim X_{1/2}$. Then
$$
\Pr[\exists S \in \FF, \; S \subseteq W] = \Pr[\forall i \in [w], |X_i \cap W| \ge 1] = (1-2^{-\kappa})^w = (1-c/w)^w \le \exp(-c).
$$
In particular, for $c \ge 1$ we get that $\FF$ is not $(1/2,1/2)$-satisfying.
\end{proof}

As we shall soon see, \Cref{conj:upper_bound_DNF} implies that the lower bound is not far from tight:
$$
\gamma(w) \le (\log w)^{O(1)}.
$$
It will be sufficient to assume
a slightly weaker version of \Cref{conj:upper_bound_DNF}, where we allow the size of $f_{upper}$ to be somewhat bigger.

\begin{conjecture}[Weaker version of \Cref{conj:upper_bound_DNF}]
\label{conj:upper_bound_weak}
Let $w \ge 2, \eps>0$. For any monotone width-$w$ DNF $f$ there exists a monotone width-$w$ DNF $f_{upper}$ such that
\begin{enumerate}[(i)]
\item $f_{upper}$ is a proper upper bound DNF for $f$.
\item $f_{upper}$ and $f$ are $\eps$-close.
\item $f_{upper}$ has size at most $((\log w)/\eps)^{cw}$ for some absolute constant $c>1$.
\end{enumerate}
\end{conjecture}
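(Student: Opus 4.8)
My plan is to follow the sunflower-contraction method behind \Cref{thm:GMR}, pushing it as far as the available combinatorial input allows. Write $f$ as a monotone DNF with set system $\FF$. One repeatedly locates a subfamily $\FF_0 \subseteq \FF$ with common core $Y := \bigcap_{S \in \FF_0} S$ (possibly empty) such that the ``petal system'' $\{S \setminus Y : S \in \FF_0\}$ is $(1/2,\delta)$-satisfying — that is, $\FF_0$ is a $(1/2,\delta)$-approximate sunflower — and replaces all terms of $\FF_0$ by the single term $Y$. Each such step is a legal proper-upper-bound operation, since $Y \subseteq S$ for every $S \in \FF_0$, so the running DNF only grows; moreover, because the coordinates inside and outside $Y$ are independent under the uniform measure and the petal system is satisfying, the step increases $\Pr_x[f(x)=1]$ by at most $\Pr_x[Y \subseteq x]\cdot\delta \le 2^{-|Y|}\delta$. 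Organising the contractions recursively by the size of the core and charging the level-$i$ contractions by their $2^{-i}$ weight (exactly as in \cite{gopalan2013dnf}) keeps the total overshoot below $\eps$ provided $\delta$ is taken to be $\eps$ divided by a fixed polynomial in the relevant parameters; the recursion bottoms out when no approximate sunflower remains, and at that point $|\FF'|$ is bounded by the threshold $N(w,\delta)$ below which every $w$-set family is guaranteed to contain a $(1/2,\delta)$-approximate sunflower.

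Thus the whole statement reduces to a clean combinatorial bound: to reach size $((\log w)/\eps)^{cw}$ one needs $N(w,\delta) \le ((\log w)/\delta)^{O(w)}$, i.e.\ one must replace the $w!$ factor in Rossman's threshold $w!\,(1.71\log(1/\delta)/p)^w$ by $(\log w)^{O(w)}$. By the dictionary of \Cref{sec:regular} this is the same as a ``spread implies satisfying'' statement — that every $(\log w)^{O(1)}$-regular $w$-set system is $(1/2,\delta)$-satisfying with $\delta$ decaying as the regularity grows — and the engine for establishing it is the structure-versus-pseudorandomness recursion used throughout the paper: at each stage either some set $T$ sits inside more than a $\kappa^{-|T|}$ fraction of the current family, in which case one peels off and contracts the $T$-cluster, or the family is $\kappa$-regular and one invokes the spread-implies-satisfying bound directly. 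Running this recursion on $\FF$ itself would then produce the contraction sequence above without ever naming a sunflower.

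I expect the real obstacle to be that \Cref{conj:upper_bound_weak} is essentially equivalent in strength to a ($1/2$-biased, approximate) form of the sunflower conjecture, so no ``soft'' argument can work: any proof must inject a genuinely improved combinatorial estimate. With only the estimates available to the classical approach — Erd\H{o}s--Rado and Rossman — the regular case can be controlled merely for $\kappa = w^{\Omega(1)}$, which feeds back exactly the $w^{O(w)}$ size of \Cref{thm:GMR} and nothing sharper. A proof of the conjecture therefore has to either import a $(\log w)^{O(w)}$-type (approximate) sunflower bound from outside the present framework, or find a sunflower-free compression scheme for the upper-bound side mirroring the one \cite{Lovett2018DNFSB} found for the lower bound in \Cref{thm:lower_bound_DNF} — and the asymmetry between having to cover \emph{all} accepting inputs without much overshoot and merely having to cover \emph{most} of them is precisely what makes that second route delicate.
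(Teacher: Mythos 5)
The statement you were asked to prove is \Cref{conj:upper_bound_weak}, which is a conjecture: the paper contains no proof of it, and its entire point is to show that this conjecture, if true, would \emph{imply} a $(\log w)^{O(w)}$ sunflower bound (\Cref{thm:main}). So there is nothing in the paper to compare your argument against, and your proposal --- correctly --- does not close the argument either. Your diagnosis of where the difficulty lies is accurate and is essentially the paper's thesis read in reverse: combined with \Cref{thm:GMR} in the other direction, the conjecture sits in a near-equivalence with improved (approximate) sunflower bounds, so any proof must contain genuinely new combinatorial content rather than a rearrangement of Erd\H{o}s--Rado and Rossman, which, as you note, only yield the $w^{O(w)}$ size of \Cref{thm:GMR}.

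One concrete warning about your second paragraph: the ``spread implies satisfying'' statement you propose as the engine --- that every $(\log w)^{O(1)}$-regular $w$-set system is $(1/2,\eps)$-satisfying --- is precisely \Cref{lemma:satisfy}, which the paper derives \emph{from} \Cref{conj:upper_bound_weak}. Invoking it to prove the conjecture would therefore be circular within this framework. Your first paragraph's reduction (contract approximate sunflowers by their cores, charge the error by $2^{-|Y|}\delta$, bottom out at the sunflower-free threshold $N(w,\delta)$) is a faithful account of the GMR mechanism and correctly isolates the needed input as $N(w,\delta) \le ((\log w)/\delta)^{O(w)}$, i.e.\ removing the $w!$ from Rossman's approximate sunflower lemma. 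That is exactly the open combinatorial problem; either it, or a sunflower-free upper-bound analogue of \cite{Lovett2018DNFSB} as you suggest, would have to be established from outside the paper's machinery.
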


\begin{lemma}
\label{lemma:satisfy}
Assume \Cref{conj:upper_bound_weak} holds. Then there exists a constant $c_0>1$ such that the following holds.
For $w \ge 2, \eps>0$ let $\kappa_0(w,\eps)=((\log w)/\eps)^{c_0}$. Let $\FF$ be a non-trivial $\kappa$-regular $w$-set system for $\kappa=\kappa_0(w,\eps)$. Then $\FF$ is $(1/2, \eps)$-satisfying.
\end{lemma}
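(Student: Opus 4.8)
The plan is to iterate \Cref{conj:upper_bound_weak} together with the regularity-closure properties of \Cref{claim:regular_upper} and \Cref{claim:regular_lower} so as to shrink the width of the set system down to an absolute constant, and then finish with a second-moment estimate. If $\FF$ is already $(1/2,\eps)$-satisfying there is nothing to prove, so assume $\Pr_x[f_\FF(x)=1]\le 1-\eps$. The engine is one ``round''. Given a non-trivial $\kappa'$-regular $v$-set system $\mathcal G$ with $v\ge 2$: apply \Cref{conj:upper_bound_weak} to $f_{\mathcal G}$ with error $\eps_1:=\eps/(2\log w)$, obtaining a proper upper bound set system $\mathcal G'$ of $f_{\mathcal G}$ that is $\eps_1$-close to $\mathcal G$, has size $|\mathcal G'|\le M:=\bigl(2(\log v)(\log w)/\eps\bigr)^{cv}$, and by \Cref{claim:regular_upper} is again $\kappa'$-regular, say via $\D'$. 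Since every set of size $\ge\ell$ has $\D'$-mass at most $(\kappa')^{-\ell}$ and $\mathcal G'$ has at most $M$ of them, $\Pr_{S\sim\D'}[|S|\ge\ell]\le M(\kappa')^{-\ell}$; taking $\ell:=\lceil\log_{\kappa'}M\rceil+1$ this is $\le 1/\kappa'$. Hence $\mathcal G'_{\mathrm{small}}:=\{S\in\mathcal G':|S|<\ell\}$ is non-trivial, is a proper lower bound of $\mathcal G'$ carrying $\D'$-mass $\ge 1-1/\kappa'$, so by \Cref{claim:regular_lower} it is $(\kappa'-1)$-regular, and it has width $<\ell\le\log_{\kappa'}M+2$. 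A short estimate gives $\log_{\kappa'}M\le\frac{6c}{c_0}v+O(1)$, using that $\ln\kappa'=\Theta\bigl(c_0\ln((\log w)/\eps)\bigr)$ and $\ln M=\Theta\bigl(cv\ln((\log w)/\eps)\bigr)$. Finally, $f_{\mathcal G'_{\mathrm{small}}}\le f_{\mathcal G'}$ and $\mathcal G'$ is an $\eps_1$-close upper bound of $\mathcal G$, so $\Pr_x[f_{\mathcal G}(x)=1]\ge\Pr_x[f_{\mathcal G'_{\mathrm{small}}}(x)=1]-\eps_1$.

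Choosing $c_0$ to be a large enough multiple of $c$, the round sends a $\kappa'$-regular $v$-set system to a $(\kappa'-1)$-regular $v'$-set system with $v'\le v/2$, as long as $v$ exceeds an absolute constant $w_\ast$. Iterating starting from $\FF$, after $k=O(\log w)$ rounds we reach a non-trivial set system $\FF^\ast$ of width at most $w_\ast$; along the way the regularity drops by at most $1$ per round, so $\FF^\ast$ is $(\kappa-k)$-regular, and since $\kappa=((\log w)/\eps)^{c_0}\ge 2\log w$ we still have regularity $\ge\kappa/2$. Telescoping the closeness bounds, $\Pr_x[f_\FF(x)=1]\ge\Pr_x[f_{\FF^\ast}(x)=1]-k\eps_1\ge\Pr_x[f_{\FF^\ast}(x)=1]-\eps/2$, so it remains to show $\FF^\ast$ is $(1/2,\eps/2)$-satisfying.

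For the base case the width $w_\ast$ is a constant, and I would use Paley--Zygmund. Let $\D^\ast$ be a $(\kappa/2)$-regular distribution on $\FF^\ast$, $x\sim X_{1/2}$, and $Z:=\sum_{S\in\FF^\ast}\D^\ast(S)\,\mathbf{1}[S\subseteq x]$. Then $\E Z=\E_{S\sim\D^\ast}[2^{-|S|}]\ge 2^{-w_\ast}$, while writing $2^{-|S\cup S'|}=2^{-|S|-|S'|}2^{|S\cap S'|}$ gives $\E Z^2\le(\E Z)^2+2^{O(w_\ast)}\Pr_{S,S'\sim\D^\ast}[S\cap S'\ne\emptyset]$, and regularity bounds $\Pr_{S,S'}[S\cap S'\ne\emptyset]\le\sum_i\Pr[i\in S]^2\le 2w_\ast/\kappa$. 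Hence $\Pr_x[f_{\FF^\ast}(x)=1]\ge(\E Z)^2/\E Z^2\ge 1-2^{O(w_\ast)}/\kappa\ge 1-\eps/2$ once $c_0$ is large (for $\eps$ bounded away from $0$, or for $2\le w<w_\ast$ where the iteration is vacuous, one applies this estimate directly and checks the same inequality). Combining with the previous paragraph yields $\Pr_x[f_\FF(x)=1]>1-\eps$.

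The hard part is the parameter bookkeeping in the iteration, and in particular the interaction between the error parameter and the regularity: every call to \Cref{conj:upper_bound_weak} costs some closeness, yet making the error small is expensive because $\kappa_0(w,\eps)$ depends polynomially on $1/\eps$. The resolution is to use the \emph{same} small error $\eps_1=\eps/(2\log w)$ in all $O(\log w)$ rounds, so the total closeness loss is only $\eps/2$, while this does not damage the width recursion because $\ln(\log v/\eps_1)=\ln\bigl(2(\log v)(\log w)/\eps\bigr)$ is still $O(\ln\kappa)$. One then has to verify that, with $c_0$ a sufficiently large multiple of $c$, each round shrinks the width by a constant factor, i.e.\ $\log_{\kappa'}M\le v/2-O(1)$, while the cumulative regularity loss stays below $\kappa/2$; checking these two bounds simultaneously is the crux.
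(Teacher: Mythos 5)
Your argument follows essentially the same template as the paper's proof of \Cref{lemma:satisfy}: repeatedly apply \Cref{conj:upper_bound_weak} to replace the current system by a small proper upper bound of the same regularity (\Cref{claim:regular_upper}), observe that the sets of size $\ge \ell$ carry mass at most $M\kappa^{-\ell}$, prune them via \Cref{claim:regular_lower} to halve the width at a cost of one unit of regularity and $O(\eps/\log w)$ of error, and recurse down to constant width. The paper packages this as an induction on $w$ with error schedule $\eps \mapsto \eps(1-1/\log w)$ and applies the conjecture only to the terms of size $\ge w/2$, whereas you iterate with a flat error $\eps_1=\eps/(2\log w)$ per round and compress the whole system; these are cosmetic differences and both bookkeepings close. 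Where you genuinely diverge is the base case: the paper (\Cref{claim:regular_satisfying_base} via \Cref{claim:regular_disjoint_base}) extracts $r\approx\sqrt{\kappa/w}$ pairwise disjoint sets and gets an \emph{exponentially} small failure probability $(1-2^{-w})^r$, while you run a second-moment/Paley--Zygmund argument which only yields a \emph{polynomially} small one.

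This difference is not harmless in the form you wrote it. Your chain $(\E Z)^2/\E Z^2\ge 1-2^{O(w_*)}/\kappa\ge 1-\eps/2$ requires $\kappa\gtrsim 2^{O(w_*)}/\eps$, which for small $w$ (where $\kappa_0(w,\eps)=((\log w)/\eps)^{c_0}$ is close to $\eps^{-c_0}$, e.g.\ $w=2$ gives exactly $\eps^{-c_0}$) forces $\eps^{-(c_0-1)}\ge 2^{O(w_*)}$ and therefore \emph{fails for any fixed $c_0$ once $\eps$ is close enough to $1$}. Your parenthetical flags ``$\eps$ bounded away from $0$,'' but that is not the problematic regime; the problem is $\eps$ near $1$ combined with $\log w$ near $1$. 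The fix is not to discard the denominator: keeping $\Pr[Z=0]\le \mathrm{Var}(Z)/\bigl((\E Z)^2+\mathrm{Var}(Z)\bigr)$ and using $\mathrm{Var}(Z)\le w/\kappa$, $(\E Z)^2\ge 4^{-w}$, one needs only $\kappa> w4^{w}(1-\eps)/\eps$, and the extra $(1-\eps)$ factor on the right saves you as $\eps\to 1$; one can then check that a sufficiently large absolute $c_0$ makes this hold for all $2\le w\le w_*$ and all $\eps\in(0,1)$. Alternatively you could simply invoke the paper's \Cref{claim:regular_satisfying_base}, whose $\log(1/\eps)$ dependence makes the base case clean. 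Beyond this, the bookkeeping is plausible, though the ``$\log_{\kappa'}M\le \tfrac{6c}{c_0}v+O(1)$'' and ``regularity stays $\ge\kappa/2$'' steps implicitly require $\log w\ge 2$, i.e.\ $w\ge 4$, so the small widths $w=2,3$ must indeed be absorbed into the base case as you indicate.
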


\begin{corollary}
\label{cor:gamma_upper}
$\gamma(w) \le \kappa_0(w,1/2) = (\log w)^{O(1)}$.
\end{corollary}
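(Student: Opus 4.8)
The plan is to read off the corollary directly from \Cref{lemma:satisfy} by specializing $\eps = 1/2$. First I would set $\eps = 1/2$, so that $\kappa_0(w,1/2) = ((\log w)/(1/2))^{c_0} = 2^{c_0}(\log w)^{c_0}$. Since $c_0$ is an absolute constant and $\log w \ge \log 2 > 0$ for all $w \ge 2$, this equals $(\log w)^{O(1)}$, which accounts for the asymptotic form in the statement; the actual content is the inequality $\gamma(w) \le \kappa_0(w,1/2)$.

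To establish that inequality, recall that $\gamma(w)$ is defined as the supremum of the set $A$ of all $\kappa$ for which there exists a non-trivial $\kappa$-regular $w$-set system that is \emph{not} $(1/2,1/2)$-satisfying. I would show $A \subseteq (0,\kappa_0(w,1/2))$, which immediately gives $\gamma(w) = \sup A \le \kappa_0(w,1/2)$. Indeed, suppose $\kappa \ge \kappa_0(w,1/2)$ and let $\FF$ be a non-trivial $\kappa$-regular $w$-set system, witnessed by a $\kappa$-regular distribution $\D$. For every set $T$ we have $\Pr_{S \sim \D}[T \subseteq S] \le \kappa^{-|T|} \le \kappa_0(w,1/2)^{-|T|}$, so $\D$ is also $\kappa_0(w,1/2)$-regular; hence $\FF$ is $\kappa_0(w,1/2)$-regular. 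Applying \Cref{lemma:satisfy} with $\eps = 1/2$ (and the constant $c_0$ provided there) shows that $\FF$ is $(1/2,1/2)$-satisfying. Thus no $\kappa \ge \kappa_0(w,1/2)$ can lie in $A$, so $A$ is bounded above by $\kappa_0(w,1/2)$ and the claim follows.

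There is essentially no obstacle at this stage: all the work has already been done in \Cref{lemma:satisfy} (and, behind it, in \Cref{conj:upper_bound_weak}). The only point meriting a line of care is the monotonicity of regularity in the parameter $\kappa$ — namely that a $\kappa$-regular distribution is automatically $\kappa'$-regular for every $\kappa' \le \kappa$ — which is what lets the conclusion of the lemma at the single value $\kappa = \kappa_0(w,1/2)$ rule out \emph{all} larger regularity parameters and hence bound the supremum defining $\gamma(w)$. Combining with $\kappa_0(w,1/2) = 2^{c_0}(\log w)^{c_0} = (\log w)^{O(1)}$ completes the proof.
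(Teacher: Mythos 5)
Your proposal is correct and matches the paper's intent exactly: the corollary is stated as an immediate consequence of \Cref{lemma:satisfy} with $\eps=1/2$, and your careful handling of the supremum in the definition of $\gamma(w)$ via the monotonicity of $\kappa$-regularity is precisely the (unwritten) argument the paper relies on.
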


We prove \Cref{lemma:satisfy} in the remainder of this section.
We start with some simple claims that would serve as a base case for \Cref{lemma:satisfy} for $w=O(1)$.

\begin{claim}
\label{claim:regular_disjoint_base}
Let $r \ge 2$. Let $\FF$ be a non-trivial $\kappa$-regular $w$-set system, where $\kappa > w {r \choose 2}$. Then $\FF$ contains $r$ pairwise disjoint sets.
\end{claim}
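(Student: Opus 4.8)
The plan is a direct probabilistic argument using the regular distribution. Let $\D$ be a $\kappa$-regular distribution supported on $\FF$, which exists by the definition of a $\kappa$-regular set system. I would sample $r$ sets $S_1,\ldots,S_r$ independently from $\D$ and show that with positive probability they are pairwise disjoint.

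Fix a pair $i \ne j$. Conditioning on $S_i$, a union bound over the elements of $S_i$ together with $\kappa$-regularity applied to the singletons $T=\{x\}$ gives
$$
\Pr[S_i \cap S_j \ne \emptyset \mid S_i] \le \sum_{x \in S_i} \Pr_{S_j \sim \D}[x \in S_j] \le \frac{|S_i|}{\kappa} \le \frac{w}{\kappa},
$$
where the last inequality uses that $\FF$ is a $w$-set system. Hence $\Pr[S_i \cap S_j \ne \emptyset] \le w/\kappa$ for each of the ${r \choose 2}$ pairs, and by the union bound
$$
\Pr[\exists\, i<j:\ S_i \cap S_j \ne \emptyset] \le {r \choose 2}\cdot \frac{w}{\kappa} < 1,
$$
since $\kappa > w {r \choose 2}$. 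Therefore there is a choice of $S_1,\ldots,S_r \in \FF$ that are pairwise disjoint.

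Finally I would use non-triviality of $\FF$: every set in $\FF$ is non-empty, so if two of the $S_i$ coincided they would intersect, contradicting pairwise disjointness. Hence $S_1,\ldots,S_r$ are $r$ distinct pairwise disjoint sets in $\FF$, as required. I do not expect a genuine obstacle here; the only subtlety is making sure the $r$ sampled sets are actually distinct, which is exactly what non-triviality buys us.
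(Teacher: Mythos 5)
Your proof is correct and takes essentially the same approach as the paper: sample $r$ sets independently from a $\kappa$-regular distribution, bound the probability a fixed pair intersects by $w/\kappa$ via a union bound over elements and $\kappa$-regularity on singletons, then union bound over the ${r\choose 2}$ pairs, and use non-triviality to get distinctness. No differences worth noting.
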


\begin{proof}
Let $\D$ be a $\kappa$-regular distribution over $\FF$. Sample independently $S,S' \sim \D$.
The probability that $S,S'$ intersect is at most
$$
\Pr[|S \cap S'| \ge 1] \le \sum_{i \in S} \Pr[i \in S'] \le w / \kappa.
$$
Let $S_1,\ldots,S_r \sim \D$ be chosen independently. Then by the union bound, the probability
that two of them intersect is at most ${r \choose 2} w / \kappa < 1$. In particular, there exist $r$ pairwise disjoint sets in $\FF$.
As $\FF$ is non-trivial, the sets are non-empty, and hence distinct.
\end{proof}

\begin{claim}
\label{claim:regular_satisfying_base}
Let $\eps >0$. Let $\FF$ be a non-trivial $\kappa$-regular $w$-set system, where $\kappa = w (2^{w} \log(1/\eps))^2$. Then $\FF$ is $(1/2,\eps)$-satisfying.
\end{claim}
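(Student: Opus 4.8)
The plan is to deduce the claim from \Cref{claim:regular_disjoint_base}: a sufficiently regular $w$-set system contains many pairwise disjoint sets, and a $(1/2)$-biased random set $W$ contains at least one member of a large collection of pairwise disjoint sets with probability close to $1$. We may assume $0<\eps<1$, since otherwise the statement is vacuous. Write $M = 2^{w}\log(1/\eps)$, so that the hypothesis reads $\kappa = wM^{2}$.

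I would first dispose of the degenerate range $M \le 1$. Here $\eps \ge \exp(-2^{-w}) > 1 - 2^{-w}$, so since $\FF$ is non-trivial, any single $S \in \FF$ is non-empty of size at most $w$, and $\Pr_{W \sim X_{1/2}}[S \subseteq W] = 2^{-|S|} \ge 2^{-w} > 1 - \eps$; hence $\FF$ is $(1/2,\eps)$-satisfying and there is nothing more to do. In the main range $M > 1$, set $r = \lfloor M\rfloor + 1 \ge 2$. Then $\binom{r}{2} \le \tfrac{M(M+1)}{2} < M^{2}$, using $M>1$, so $\kappa = wM^{2} > w\binom{r}{2}$, and \Cref{claim:regular_disjoint_base} supplies pairwise disjoint sets $S_{1},\dots,S_{r} \in \FF$, each non-empty and of size at most $w$. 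Since distinct $S_i$ depend on disjoint sets of coordinates, the indicators $\mathbf 1[S_i \subseteq W]$ are independent for $W \sim X_{1/2}$, and
$$
\Pr_{W}\bigl[\,\forall i,\ S_i \not\subseteq W\,\bigr] = \prod_{i=1}^{r}\bigl(1 - 2^{-|S_i|}\bigr) \le \bigl(1 - 2^{-w}\bigr)^{r} \le \exp\bigl(-r\,2^{-w}\bigr) < \exp\bigl(-M\,2^{-w}\bigr) = \eps ,
$$
the last step using $r > M$ and $M\,2^{-w} = \log(1/\eps)$. Thus $\Pr_W[\exists S \in \FF:\ S \subseteq W] > 1-\eps$, which is precisely the assertion that $\FF$ is $(1/2,\eps)$-satisfying.

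I do not expect a real obstacle: this claim is intended only as the $w = O(1)$ base case for \Cref{lemma:satisfy}, and the quadratic slack in $\kappa = wM^{2}$ is exactly what makes the chosen $r$ simultaneously large enough that $(1-2^{-w})^{r} < \eps$ and small enough that \Cref{claim:regular_disjoint_base} applies. The only mild care needed is the separate treatment of the corner case $M \le 1$ handled above. (A second-moment argument on $Z(W) = \sum_{S \in \FF:\, S \subseteq W}\D(S)\,2^{|S|}$, which satisfies $\E[Z]=1$ and $\E[Z^{2}] = \E_{S,S'\sim\D}[2^{|S\cap S'|}] \le \sum_{k\ge 0}(w/\kappa)^{k}$, would also work and is in fact quantitatively stronger, but the disjoint-sets route is cleaner and matches the stated parameters.)
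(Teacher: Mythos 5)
Your proof is correct and follows the same route as the paper: invoke \Cref{claim:regular_disjoint_base} to extract roughly $2^{w}\log(1/\eps)$ pairwise disjoint sets, then use independence of the events $S_i \subseteq W$ for $W \sim X_{1/2}$ to bound the failure probability by $(1-2^{-w})^{r} < \eps$. You are in fact a bit more careful than the paper's own one-line argument, which takes $r = 2^{w}\log(1/\eps)$ without rounding to an integer and without noting the degenerate range where this quantity is below $2$; your choice $r = \lfloor M\rfloor + 1$ and your separate treatment of $M \le 1$ tidy up both points without changing the substance.
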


\begin{proof}
Assume $\FF \subseteq \PP(X)$. \Cref{claim:regular_disjoint_base} implies that $\FF$ contains $r=2^w \log(1/\eps)$ pairwise disjoint sets $S_1,\ldots,S_r$ of size at most $w$. Let $W \sim X_{1/2}$. Then
$$
\Pr[\exists S \in \FF, S \subseteq W] \ge
 \Pr[\exists i \in [r], S_i \subseteq W] \ge 1-(1-2^{-w})^r > 1-\eps.
$$
\end{proof}

\begin{proof}[Proof of \Cref{lemma:satisfy}]
We will need several properties from $\kappa_0$ in the proof.
To simplify notations, we shorthand $\kappa_0(w/2,\eps)$ for $\kappa_0(\lfloor w/2 \rfloor,\eps)$ throughout.
The constant $c > 1$ below is the absolute constant from \Cref{conj:upper_bound_DNF}. We need a constant $c'>1$ so that the following conditions are satisfied:
\begin{enumerate}[(i)]
\item $\kappa_0(w,\eps) \ge w (2^w \log(1/\eps))^2$ for $w=1,2$ and $\eps>0$.
\item $\kappa_0(w,\eps) \ge ((\log w)/\eps)^{12c}$ for $w \ge 3,\eps>0$.
\item $\kappa_0(w,\eps) \ge \kappa_0(w/2, \eps(1-1/\log w))+1$ for $w \ge 3,\eps>0$.
\end{enumerate}
One can check that the function $\tau(w,\eps) = (\log w) / \eps$ satisfies $\tau(w/2, \eps(1-1/\log w)) \le \tau(w,\eps)$, with equality
when $w$ is even. Thus taking $\kappa_0(w,\eps) = ((\log w)^2 / \eps)^{c'}$ satisfies the conditions for a large enough $c' \ge 12 c$. We then take $c_0 = 2c'$.

The proof of lemma \Cref{lemma:satisfy} is by induction on $w$.
The base cases are $w=1$ and $w=2$ which follow from \Cref{claim:regular_satisfying_base} and condition (i) on $\kappa_0$. Thus, we assume from now that $w \ge 3$. We need to prove that for $f=f_{\FF}$ we have
$$
\Pr[f(x)=0] < \eps.
$$

Let $\gamma = \eps/\log w$ and assume that $\FF$ is $\kappa$-regular for
$\kappa=\kappa_{0}(w,\eps)$. Let $\FF_1=\{S \in \FF: |S| \ge w/2\}$ and let $f_1=f_{\FF_1}$ be the corresponding DNF.
Applying \Cref{conj:upper_bound_weak} to $f_1$ with error parameter $\gamma$,
we obtain that there exists a $\gamma$-approximate proper upper bound DNF $f_2$ for $f_1$ of size $s=((\log w)/\gamma)^{cw} \le
((\log w)/\eps)^{2cw}$. Let $\FF_2$ be the corresponding set system to $f_2$,
and observe that $\FF_2$ is a proper upper bound set system for $\FF_1$.
Let $\FF_3 = (\FF \setminus \FF_1) \cup \FF_2$ and let $f_3=f_{\FF_3}$ be the corresponding DNF. Then
$$
\Pr[f(x)=0] \le \Pr[f_3(x)=0] + \left(\Pr[f_2(x)=0] - \Pr[f_1(x)=0]\right) \le \Pr[f_3(x)=0] + \gamma.
$$
We may assume without loss of generality that $\FF_3$ is non-trivial, as otherwise $f_3 \equiv 1$, hence $\Pr[f(x)=0] \le \gamma$ and we are done.

Next, observe that $\FF_3$ is a proper upper bound set system for $\FF$.
As we assume that $\FF$ is $\kappa$-regular, then by \Cref{claim:regular_upper} we obtain that $\FF_3$ is also
$\kappa$-regular. Let $\D$ be a $\kappa$-regular distribution supported on $\FF_3$. Let $\FF_4 = \{S \in \FF_3: |S| \ge w/2\}$,
where $\FF_4 \subseteq \FF_2$. As each set $S \in \FF_4$ has size $|S| \ge w/2$ then, since $D$ is $\kappa$-regular, we have
$$
\D(S) \le \kappa^{-w/2}.
$$
Summing over all $S \in \FF_4$ we obtain that
$$
\D(\FF_4) \le |\FF_4| \cdot \kappa^{-w/2}
\le |\FF_2| \cdot \kappa^{-w/2}
\le \left( \left(\frac{\log w}{\eps}\right)^{2c} \kappa^{-1/2} \right)^{w}.
$$
We would need that $\D(\FF_4) \le 1/\kappa$. As $w \ge 3$, this follows from condition (ii) on $\kappa_0$.
Let $\FF_5=\FF_3 \setminus \FF_4$. Then $\FF_5$ is a $(w/2)$-set system.
Note that $\FF_5$ is non-trivial since $\FF_4$ is a strict subset of $\FF_3$.
By \Cref{claim:regular_lower} $\FF_5$ is $\kappa'$-regular
for
$$
\kappa' = \kappa \cdot \D(\FF_5) = \kappa (1 - \D(\FF_4)) \ge \kappa-1.
$$
Let $\eps' = \eps(1-1/\log w)$. Assumption (iii) on $\kappa_0$ gives that $\kappa_0(w/2, \eps') \le \kappa_0(w,\eps)-1$. Thus, $\FF_5$
is $\kappa_0(w/2,\eps')$-regular.
Applying the induction hypothesis, if we denote by $f_5$ the corresponding DNF for $\FF_5$, then
$$
\Pr[f_5=0] < \eps'.
$$
Finally, as $\FF_5 \subseteq \FF_3$ we have $\Pr[f_3(x)=0] \le \Pr[f_5(x)=0]$. Putting these together we obtain that
$$
\Pr[f(x)=0] \le \Pr[f_3(x)=0] + \gamma \le \Pr[f_5(x)=0]+\gamma < \eps' + \gamma = \eps (1 - 1/\log w) + \eps / \log w= \eps.
$$
\end{proof}

\section{Intersecting regular set systems}
\label{sec:intersecting}
As we showed in \Cref{claim:satisfying_contains_disjoint}, if $\FF$ is a $(1/r,1/r)$-satisfying set system, then it contains $r$ pairwise disjoint sets. However we only proved that a regular enough set system is $(1/2,1/2)$-satisfying so far. In this section, we prove that this is enough to show the existence of an $r$-sunflower for any constant $r$, and with a comparable condition of regularity. Our proof is based on a the study of regular intersecting set systems.

\begin{definition}[Intersecting set system]
A set system is intersecting if any two sets in it intersect. In other words, it does not contain two disjoint sets.
\end{definition}

\begin{definition}
For $w \ge 1,r \ge 2$ define
$$
\alpha(w, r) = \sup \{\kappa: \exists \text{non-trivial }\kappa \text{-regular } w \text{-set system without } r \text{ pairwise disjoint sets}\}.
$$
It will be convenient to shorthand $\beta(w) = \alpha(w,2)$, which can equivalently be defined as
$$
\beta(w) = \sup \{\kappa: \exists \text{non-trivial } \kappa \text{-regular intersecting } w \text{-set system}\}.
$$
\end{definition}

\begin{claim}
$\alpha(w+1,r) \ge \alpha(w,r)$ and $\alpha(w,r+1) \ge \alpha(w,r)$ for all $w \ge 1, r \ge 2$.
\end{claim}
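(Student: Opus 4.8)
The plan is to observe that both inequalities are immediate from the definition of $\alpha$: in each case a witness for the right-hand quantity is automatically a witness for the left-hand quantity, so the set of $\kappa$ over which the supremum is taken only grows.

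First I would handle $\alpha(w+1,r) \ge \alpha(w,r)$. Fix any $\kappa$ for which there exists a non-trivial $\kappa$-regular $w$-set system $\FF$ with no $r$ pairwise disjoint sets. Since every set in $\FF$ has size at most $w$, hence at most $w+1$, the same system $\FF$ is also a $(w+1)$-set system; it remains non-trivial, it is still $\kappa$-regular (via the same regular distribution), and it still has no $r$ pairwise disjoint sets. Therefore $\kappa$ lies in the set over which the supremum defining $\alpha(w+1,r)$ is taken, so $\alpha(w+1,r) \ge \kappa$; taking the supremum over all admissible $\kappa$ yields $\alpha(w+1,r) \ge \alpha(w,r)$.

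Next I would handle $\alpha(w,r+1) \ge \alpha(w,r)$. Again fix $\kappa$ together with a non-trivial $\kappa$-regular $w$-set system $\FF$ having no $r$ pairwise disjoint sets. If $\FF$ contained $r+1$ pairwise disjoint sets, then in particular it would contain $r$ pairwise disjoint sets, contradicting the choice of $\FF$; hence $\FF$ has no $r+1$ pairwise disjoint sets and is a valid witness for $\alpha(w,r+1)$, giving $\alpha(w,r+1) \ge \kappa$, and again taking the supremum finishes.

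The statement is purely definitional, so there is no genuine obstacle; the only minor point worth a sentence is that the supremum in the definition of $\alpha$ is over a nonempty set (for instance the single-set system $\{\{x\}\}$ is $1$-regular and trivially has no two disjoint sets), so these are comparisons of honest suprema. Even absent that remark, the argument via inclusion of the witnessing sets of $\kappa$ already delivers both inequalities.
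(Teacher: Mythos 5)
Your argument is correct and matches the paper's proof exactly: both inequalities follow by observing that any witness for $\alpha(w,r)$ is automatically a witness for $\alpha(w+1,r)$ (since a $w$-set system is a $(w+1)$-set system) and for $\alpha(w,r+1)$ (since having no $r$ pairwise disjoint sets implies having no $r+1$). Your added remark about nonemptiness of the set of admissible $\kappa$ is a harmless extra nicety.
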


\begin{proof}
The first claim follows by our definition that a $w$-set system is a set system where all sets have size at most $w$. In particular, any $w$-set system is also a $(w+1)$-set system and hence $\alpha(w+1,r) \ge \alpha(w,r)$. The second claim holds since a set system that does not contain $r$ pairwise disjoint sets, also does not contain $r+1$ pairwise disjoint sets.
\end{proof}

We start by showing that upper bounds on $\alpha(w,r)$ directly translate to upper bounds on sunflowers. This is reminiscent to the original proof of Erd\H{o}s and Rado \cite{ErdosR1960}.

\begin{claim}
\label{claim:alpha_w_r_sunflower}
Let $\FF$ be a non-redundant $w$-set system of size $|\FF| > \alpha(w, r)^w$. Then $\FF$ contains an $r$-sunflower.
\end{claim}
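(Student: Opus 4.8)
The plan is to mimic the Erd\H{o}s--Rado recursion, with the regularity threshold $\alpha(w,r)$ playing the role of the branching factor $r-1$; I proceed by induction on $w$. First a preliminary observation: $\alpha(w,r)\ge 1$ for all $w\ge 1$ and $r\ge 2$, witnessed by the single-singleton system $\{\{x\}\}$, which is a $1$-regular $w$-set system (the point mass on $\{x\}$ has $\Pr[\{x\}\subseteq S]=1\le 1^{-1}$, and every larger $T$ has probability $0$) with fewer than $r$ sets, hence certainly no $r$ pairwise disjoint sets. Consequently $|\FF|>\alpha(w,r)^w\ge 1$ forces $|\FF|\ge 2$, and since a non-redundant family with at least two sets cannot contain $\emptyset$ (the empty set is strictly below every other set), $\FF$ is non-trivial. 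This is needed because the definition of $\alpha$ quantifies over non-trivial systems; note also that $\alpha(w,r)\ge 1$ is necessary for the statement, since a single non-empty set is non-redundant but contains no $r$-sunflower.

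For the inductive step, set $\kappa=|\FF|^{1/w}$, so that $\kappa>\alpha(w,r)$ precisely because $|\FF|>\alpha(w,r)^w$, and let $\D$ be the uniform distribution on $\FF$. If $\D$ is $\kappa$-regular, then $\FF$ is a non-trivial $\kappa$-regular $w$-set system with $\kappa>\alpha(w,r)$, so by the very definition of $\alpha(w,r)$ it must contain $r$ pairwise disjoint sets, which form an $r$-sunflower with empty core, and we are done. Otherwise there is a non-empty $T$ with $\Pr_{S\sim\D}[T\subseteq S]>\kappa^{-|T|}$; writing $d_T$ for the number of sets of $\FF$ containing $T$, this reads $d_T>|\FF|\,\kappa^{-|T|}=\kappa^{\,w-|T|}$. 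A size check pins down $1\le|T|\le w-1$: if $|T|>w$ then $d_T=0$, contradicting $d_T>\kappa^{w-|T|}>0$, and if $|T|=w$ then any $S\in\FF$ with $T\subseteq S$ equals $T$, so $d_T\le 1=\kappa^{0}$, again a contradiction; in particular this branch is vacuous when $w=1$, so the induction is well-founded. Using $\kappa>\alpha(w,r)\ge\alpha(w-|T|,r)\ge 0$ (the second inequality by iterating the monotonicity claim, legitimate as all indices involved are $\ge 1$) and raising to the power $w-|T|\ge 1$, we obtain $d_T>\alpha(w-|T|,r)^{\,w-|T|}$.

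Finally, pass to the link $\FF_T=\{S\setminus T:S\in\FF,\ T\subseteq S\}$. Because $S=(S\setminus T)\sqcup T$, the map $S\mapsto S\setminus T$ is a bijection from $\{S\in\FF:T\subseteq S\}$ onto $\FF_T$, so $|\FF_T|=d_T$; moreover $\FF_T$ is a $(w-|T|)$-set system, and it is non-redundant since $A\subsetneq B$ in $\FF_T$ would lift to $A\sqcup T\subsetneq B\sqcup T$ in $\FF$. As $|\FF_T|=d_T>\alpha(w-|T|,r)^{\,w-|T|}$ and $w-|T|<w$, the induction hypothesis gives an $r$-sunflower in $\FF_T$ with petals $P_1,\dots,P_r$ and core $C$; lifting back, the corresponding distinct sets $S_1,\dots,S_r\in\FF$ satisfy $S_i\cap S_j=(P_i\sqcup T)\cap(P_j\sqcup T)=(P_i\cap P_j)\sqcup T=C\sqcup T$ for all $i\ne j$ (using that $T$ is disjoint from every $P_\ell$), hence form an $r$-sunflower in $\FF$. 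The main point to get right is the bookkeeping in the popular-$T$ case — verifying $1\le|T|\le w-1$ so that the recursion strictly decreases $w$, and checking that the lifted family is again a valid non-redundant set system of the required size — but this is exactly the elementary counting sketched above, so I do not expect a genuine obstacle.
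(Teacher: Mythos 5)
Your proof is correct and takes essentially the same route as the paper's: induction on $w$, where either the uniform distribution on $\FF$ is regular enough to extract $r$ pairwise disjoint sets directly from the definition of $\alpha(w,r)$, or there is a popular nonempty set $T$ whose link $\{S\setminus T: T\subseteq S\in\FF\}$ is a non-redundant $(w-|T|)$-set system of size exceeding $\alpha(w-|T|,r)^{w-|T|}$, so the inductive hypothesis applies and the resulting sunflower lifts by adding $T$ to the core. Your version is a bit more careful than the paper's terse writeup in two useful respects: you pin down an explicit $\kappa=|\FF|^{1/w}$, which makes the inequality $d_T>\alpha(w-t,r)^{w-t}$ immediate rather than requiring a limiting argument over $\kappa\downarrow\alpha(w,r)$, and you explicitly verify $\alpha(w,r)\ge 1$, the bound $1\le|T|\le w-1$, and the $w=1$ base case.
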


\begin{proof}
Note that $\FF$ is also non-trivial by our assumptions.
The proof is by induction on $w$. If $\FF$ contains $r$ pairwise disjoint sets then we are done.
Otherwise, $\FF$ is not $\kappa$-regular for any $\kappa>\alpha(w,r)$.
In particular, the uniform distribution over $\FF$ is not $\kappa$-regular. This implies that there exists a nonempty set
$T$ of size $|T|=t \ge 1$ such that
$$
\FF' = \{S \setminus T: S \in \FF, T \subseteq S\}
$$
has size $|\FF'| \ge |\FF| \kappa^{-t} > \alpha(w,r)^{w-t} \ge \alpha(w-t,r)^{w-t}$. Observe that $\FF'$ is also non-redundant.
By induction, $\FF'$ contains an $r$-sunflower $S_1 \setminus T, \ldots, S_r \setminus T$. Hence $S_1,\ldots,S_r$ is a sunflower in $\FF$.
\end{proof}

\begin{corollary}
\label{cor:alpha_w_r_sunflower_general}
Let $\FF$ be a $w$-set system of size $|\FF| > (2\alpha(w, r))^w$. Then $\FF$ contains an $r$-sunflower.
\end{corollary}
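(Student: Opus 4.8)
The plan is to reduce the general case to the non-redundant case already settled in \Cref{claim:alpha_w_r_sunflower}, paying a factor of $2^w$ in the size bound. The mechanism is simple: a general $w$-set system splits into $w+1$ antichains according to the common size of the sets, and $2^w \ge w+1$ for every $w \ge 1$ (immediate by induction, since $2^{w+1} = 2\cdot 2^w \ge 2(w+1) \ge (w+1)+1$). Consequently, from the hypothesis $|\FF| > (2\alpha(w,r))^w = 2^w\,\alpha(w,r)^w$ we obtain $|\FF| > (w+1)\,\alpha(w,r)^w$.

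Next I would partition $\FF = \FF_0 \cup \FF_1 \cup \cdots \cup \FF_w$, where $\FF_k = \{S \in \FF : |S| = k\}$. By pigeonhole there is an index $k$ with $|\FF_k| \ge |\FF|/(w+1) > \alpha(w,r)^w \ge 1$, so in fact $|\FF_k| \ge 2$. Since there is only one set of size $0$, this forces $k \ge 1$; hence $\FF_k$ is non-trivial. Moreover, all sets of $\FF_k$ have the same size, so $\FF_k$ is an antichain, i.e.\ non-redundant, and it remains a $w$-set system. Thus $\FF_k$ is a non-redundant $w$-set system with $|\FF_k| > \alpha(w,r)^w$, and \Cref{claim:alpha_w_r_sunflower} yields an $r$-sunflower inside $\FF_k \subseteq \FF$, completing the argument.

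The only points that need attention — and none of them is a genuine obstacle — are the routine triviality bookkeeping (ruling out $k=0$ and the empty-set/empty-family edge cases, using that $|\FF_k|$ is a strict majority-type bound and hence at least $2$), and verifying the elementary inequality $2^w \ge w+1$ so that the $2^w$ factor in the statement really absorbs the $w+1$ loss from the size partition. I expect no substantive difficulty, and one could in fact prove the slightly stronger statement with $(2\alpha(w,r))^w$ replaced by $(w+1)\,\alpha(w,r)^w$.
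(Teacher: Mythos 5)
Your proof is correct but uses a genuinely different reduction to \Cref{claim:alpha_w_r_sunflower} than the paper does. The paper passes to the subfamily $\FF'$ of \emph{maximal} sets of $\FF$: since each maximal set $S'$ has at most $2^{|S'|} \le 2^w$ subsets and every member of $\FF$ lies below some maximal set, $|\FF'| \ge |\FF|/2^w > \alpha(w,r)^w$, and $\FF'$ is non-redundant by construction. You instead partition $\FF$ into the $w+1$ size classes $\FF_0,\dots,\FF_w$, each of which is automatically an antichain, and apply pigeonhole. Both routes are sound (and your bookkeeping ruling out $k=0$ and the trivial cases is right: $|\FF_k| > \alpha(w,r)^w \ge 1$ forces $|\FF_k| \ge 2$, hence $k \ge 1$ and non-triviality). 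Your decomposition loses only a factor $w+1$ rather than $2^w$, so as you observe it actually proves the sharper statement with $(w+1)\,\alpha(w,r)^w$ in place of $(2\alpha(w,r))^w$; the paper's version suffices for its purposes since downstream it only needs the cruder $2^{O(w)}$ overhead, but your variant is strictly stronger and no more complicated.
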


\begin{proof}
Let $\FF' \subset \FF$ be the sub set system of all the \emph{maximal} sets in $\FF$,
$$
\FF' = \{S \in \FF: \text{there is no } S' \in \FF \text{ with } S \subsetneq S'\}.
$$
Note that $\FF'$ is a non-redundant $w$-set system of size $|\FF'| \ge |\FF|/2^w$, and hence by \Cref{claim:alpha_w_r_sunflower} it contains an $r$-sunflower.
\end{proof}

The main lemma we prove in this section is that upper bounds on $\beta$ imply upper bounds on $\alpha$.

\begin{lemma}
\label{lemma:alpha_beta}
For all $w \ge 1, r \ge 3$ it holds that
$\alpha(w, r ) \leq r 2^{r+1} \beta(wr)^r$.
\end{lemma}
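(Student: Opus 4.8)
The plan is to argue by induction on $r$. The base case $r=2$ is the identity $\alpha(w,2)=\beta(w)$, which is immediate from the definition (and $\beta(w)\le\beta(wr)\le r2^{r+1}\beta(wr)^r$ since $\beta\ge1$ and $\beta$ is monotone). So fix $r\ge3$, let $\FF$ be a non‑trivial $\kappa$‑regular $w$‑set system with no $r$ pairwise disjoint sets, and let $\D$ be a witnessing $\kappa$‑regular distribution; replacing $\FF$ by the support of $\D$, we may assume every $S\in\FF$ has $\D(S)>0$. We must show $\kappa\le r2^{r+1}\beta(wr)^r$.

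The first and key observation is that the induction hypothesis forces $\FF$ to be "almost intersecting." For any sets $S_1,\dots,S_{j-1}\in\FF$ that are pairwise disjoint, the sub‑system $\FF_{\perp S_1,\dots,S_{j-1}}=\{S'\in\FF: S'\cap S_i=\emptyset\ \forall i\}$ contains no $r-j+1$ pairwise disjoint sets (such a family together with $S_1,\dots,S_{j-1}$ would give $r$ pairwise disjoint sets in $\FF$), and by \Cref{claim:regular_lower} it is $\big(\kappa\cdot\D(\FF_{\perp S_1,\dots,S_{j-1}})\big)$‑regular; hence $\kappa\cdot\D(\FF_{\perp S_1,\dots,S_{j-1}})\le\alpha(w,r-j+1)$. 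In particular, for every single $S\in\FF$,
\[
\Pr_{S'\sim\D}\big[S'\cap S=\emptyset\big]\;\le\;\frac{\alpha(w,r-1)}{\kappa}\;=:\;\frac{A}{\kappa},
\]
and more generally a fresh sample from $\D$ avoids a fixed $(j-1)$‑matching with probability at most $\alpha(w,r-j+1)/\kappa$ — this is the replacement for the trivial bound $w/\kappa$, and it is exactly what will keep $w$ out of the recursion. (If $\FF$ has fewer than $r-1$ pairwise disjoint sets, then $\kappa\le\alpha(w,r-1)=A$ and we are done by induction, so assume $\FF$ has an $(r-1)$‑matching.)

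The second step turns this into an intersecting set system of width at most $wr$. Consider $\G=\{S_1\cup\cdots\cup S_{r-1}: S_1,\dots,S_{r-1}\in\FF \text{ pairwise disjoint}\}$, a $w(r-1)$‑set system, nonempty by the previous remark. It is intersecting: disjoint $U,U'\in\G$ would produce $2(r-1)\ge r$ pairwise disjoint sets in $\FF$ (distinct, being nonempty and pairwise disjoint), a contradiction. To define a regular distribution on $\G$, sample $S_1,\dots,S_{r-1}$ sequentially, drawing $S_j$ from $\D$ conditioned on being disjoint from $S_1,\dots,S_{j-1}$, and output $U=\bigcup_i S_i$; by the per‑step estimates above each conditioning event has probability at least $1-\alpha(w,r-j+1)/\kappa$, and bounding, for a fixed $T$, the probability that $T\subseteq U$ by the $\le(r-1)^{|T|}$ ways of distributing $T$ among the parts (together with $\kappa$‑regularity of $\D$ and \Cref{claim:regular_lower} for each conditioning) shows that this distribution is $\kappa'$‑regular for some $\kappa'=\Omega\!\big(\kappa/(r-1)\big)$ provided $\kappa$ exceeds a suitable multiple of $A$. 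Since $\G$ is an intersecting $w(r-1)$‑set system, $\kappa'\le\beta\big(w(r-1)\big)\le\beta(wr)$, which yields a bound of the form $\kappa\le (r-1)\,\beta(wr)+O(r^2)\cdot\alpha(w,r-1)$; plugging in the induction hypothesis $\alpha(w,r-1)\le(r-1)2^{r}\beta(wr)^{r-1}$ and absorbing the polynomial‑in‑$r$ factors gives $\kappa\le r2^{r+1}\beta(wr)^r$.

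The step I expect to be the main obstacle is precisely the regularity accounting in the construction of $\G$: the naive version ("sample $S_1,\dots,S_{r-1}$ independently and condition on pairwise‑disjointness") only controls the conditioning probability by $1-\binom{r-1}{2}w/\kappa$, which loses an additive term of order $w$ and is useless once $\beta$ is subpolynomial in $w$. One must instead feed in the sharper, inductive estimate from the first paragraph — that the sub‑system disjoint from a $(j-1)$‑matching has $\D$‑mass at most $\alpha(w,r-j+1)/\kappa$ rather than $w/\kappa$ — and track how the regularity of the conditional distributions degrades along the sequential sampling. Getting the bookkeeping right here (including the case distinction on whether $\FF$ has an $(r-1)$‑matching at all, and ensuring the auxiliary distribution really has regularity $\Omega(\kappa/r)$ for every test set $T$, not just singletons) is where the argument needs the most care, and it is what produces the $\beta(wr)^r$ rather than $\beta(wr)+(\text{linear in }w)$.
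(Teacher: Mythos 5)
Your strategy has the right shape (manufacture an intersecting $\,wr$-width family out of tuples of pairwise disjoint sets, compare its regularity to $\beta(wr)$, and recurse on $r$), and the paper does something in this spirit. But the central technical step — that the sequential sampling produces a distribution on $\G$ that is $\Omega(\kappa/r)$-regular — does not go through, and the obstacle is not mere bookkeeping. You invoke the estimate $\Pr_{S'\sim\D}[S'\cap S_i=\emptyset \ \forall i\le j-1]\le \alpha(w,r-j+1)/\kappa$ and then write that ``each conditioning event has probability at least $1-\alpha(w,r-j+1)/\kappa$.'' That flips the inequality: the conditioning event in your sampler \emph{is} disjointness from the matching so far, and its probability is \emph{at most} $\alpha(w,r-j+1)/\kappa$ — i.e., small, not close to $1$. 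Consequently the conditional law of $S_j$, being $\D$ restricted to $\FF_{\perp S_1,\dots,S_{j-1}}$ and renormalized, is only $(\kappa\cdot\D(\FF_{\perp}))$-regular by \Cref{claim:regular_lower}, and you only have an \emph{upper} bound on $\D(\FF_{\perp})$; it can be arbitrarily small (or zero, so the sampler is not even well-defined starting from some $S_1$). There is no lower bound on those conditional masses, and without one the per-coordinate factors in $\Pr[T\subseteq U]$ cannot be pushed down to $\kappa^{-|T|}$ up to a $(r-1)^{|T|}$ loss. Note also that the intermediate inequality you aim for, $\kappa\le (r-1)\beta(wr)+O(r^2)\alpha(w,r-1)$, would give $\alpha(w,r)=O_r(\beta(wr)^{r-1})$ — strictly stronger than the lemma — which is another signal that the regularity claim is too optimistic.

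The paper sidesteps exactly this problem: it never asserts that the distribution on unions is $\Omega(\kappa)$-regular. Instead it runs a greedy fractional-matching peel on $\D$ over a \emph{single} doubling step ($r\to 2r$ rather than $r\to r-1$), so that as long as at least half the $\D$-mass remains, \Cref{claim:regular_lower} guarantees $r$ pairwise disjoint sets exist; this produces weighted union sets $W_i$ forming an intersecting $wr$-set system with total weight $\ge 1/(2r)$. Intersecting means the induced distribution \emph{cannot} be $\beta(wr)$-regular, so a witness $T$ exists. The key move you are missing is then a pigeonhole in two stages: each $W_i$ containing $T$ has some component $S_{i,j_i}$ carrying at least $|T|/r$ elements of $T$, and among the $2^{|T|}$ possible subsets $T_i=T\cap S_{i,j_i}$ one value $T^*$ carries a $2^{-|T|}$ fraction of the weight. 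Telescoping the peel weights then lower-bounds $\sum_{S\ni T^*}\D(S)$ and contradicts $\kappa$-regularity of the \emph{original} $\D$, with $|T^*|\ge |T|/r$ responsible for the $\beta(wr)^r$ (rather than $\beta(wr)$) in the final bound. To fix your write-up you would need to abandon the claim that the union distribution is regular and instead argue contrapositively through a witness, as the paper does.
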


Before proving \Cref{lemma:alpha_beta}, we first prove some upper and lower bounds on $\beta(w)$. Although these are not needed in the proof of \Cref{lemma:alpha_beta}, we feel that they help gain intuition on $\beta(w)$.

\begin{claim}
\label{claim:beta_upper}
$\beta(w) \leq w$.
\end{claim}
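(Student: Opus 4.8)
The plan is to show that any non-trivial $\kappa$-regular intersecting $w$-set system must satisfy $\kappa \le w$, which by the definition of $\beta(w)$ as a supremum yields the claim. So suppose $\FF$ is a non-trivial $\kappa$-regular intersecting $w$-set system, and let $\D$ be a $\kappa$-regular distribution supported on $\FF$. I would sample two sets $S, S' \sim \D$ independently and bound below the probability that they intersect. Since $\FF$ is intersecting, every pair of sets in the support meets, so this probability is exactly $1$.

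On the other hand, I would bound it above: conditioning on $S$, we have
$$
\Pr_{S'}[S \cap S' \ne \emptyset] \le \sum_{i \in S} \Pr_{S'}[i \in S'] \le \sum_{i \in S} \kappa^{-1} = \frac{|S|}{\kappa} \le \frac{w}{\kappa},
$$
using the union bound, the $\kappa$-regularity applied to singletons $T = \{i\}$, and $|S| \le w$. Taking expectation over $S$ keeps the same upper bound $w/\kappa$. Combining with the lower bound gives $1 \le w/\kappa$, i.e.\ $\kappa \le w$. This holds for every $\kappa$ for which such a set system exists, hence $\beta(w) \le w$.

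This is essentially the same computation as in \Cref{claim:regular_disjoint_base} (with $r=2$), just read in the contrapositive direction: there, $\kappa > w\binom{2}{2} = w$ forces two disjoint sets, which is exactly the statement that an intersecting system cannot be $\kappa$-regular for $\kappa > w$. There is no real obstacle here; the only point to be careful about is the non-triviality hypothesis, which guarantees $\D$ is supported on non-empty sets so that the regularity bound $\Pr[\{i\} \subseteq S'] \le \kappa^{-1}$ is meaningful and the "intersecting" condition is not vacuously violated by an empty set. One could alternatively just cite \Cref{claim:regular_disjoint_base} directly: if $\kappa > w = w\binom{2}{2}$ then $\FF$ contains $2$ disjoint sets, contradicting that it is intersecting.
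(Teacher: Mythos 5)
Your proposal is correct and is essentially the paper's own argument: the paper simply invokes \Cref{claim:regular_disjoint_base} with $r=2$, which is exactly the two-sample union-bound computation you spell out (and which you also note can be cited directly). No gap.
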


\begin{proof}
Apply \Cref{claim:regular_disjoint_base} for $r=2$.
\end{proof}

It is easy to construct examples that show that $\beta(w)>1$; for example, the family of all sets of size $w$ in a universe of size $2w-1$ is intersecting and $((2w-1)/w)$-regular. The following example shows that $\beta(w)$ is super-constant.

\begin{claim}
\label{claim:beta_lower}
$\beta(w) \geq \Omega\left(\frac{ \log w}{\log\log w}\right)$.
\end{claim}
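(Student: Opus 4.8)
The plan is to construct an explicit non-trivial $\kappa$-regular intersecting $w$-set system with $\kappa = \Omega(\log w / \log\log w)$. The natural candidate is a \emph{projective plane}–style or \emph{covering}–style design: we want sets that pairwise intersect (so the system is intersecting), and we want no element (and no small set of elements) to appear in too large a fraction of the sets (so the system is regular). A clean way to force the intersecting property while keeping individual elements spread out is to take a ground set of the form $X = Y \times [k]$ for a suitable $Y$ and $k$, and let the sets be ``graphs of functions'': to each $y_0 \in Y$ and each function $g\colon Y \to [k]$ in some carefully chosen family, associate the set $S_{y_0,g}$ consisting of a designated ``pivot'' coordinate together with the pairs $(y, g(y))$ for $y$ ranging over a block of size $\approx w$. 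Two such sets intersect provided their blocks overlap in a coordinate $y$ where the two functions agree; choosing the function family to be something like a small-bias or Reed–Solomon code over an alphabet of size $k$ guarantees that any two functions agree on a controlled number of coordinates, and one tunes $k$ so that this number is positive but the fraction of sets through any fixed pair $(y,j)$ is only $\approx 1/k$. Setting $k \approx \log w / \log\log w$ (so that a codeword of length $\approx w$ over alphabet $[k]$ still has enough ``agreement mass'' for the intersecting property) should yield $\kappa \approx k$.

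Concretely, the steps I would carry out are: (1) fix parameters $k$, the block size $\approx w$, and a function family $\mathcal{G}$ of size roughly $k^{O(w)}$ with bounded pairwise agreement; (2) define the set system $\FF = \{S_g : g \in \mathcal{G}\}$ where each $S_g$ has size at most $w$, and verify non-triviality (the empty set does not appear) and that $\FF$ is an anti-chain or at least that we may pass to its maximal sets; (3) check the intersecting property: for any $g \ne g'$, their agreement set is nonempty and lies inside overlapping blocks, so $S_g \cap S_{g'} \ne \emptyset$; (4) bound the regularity: for a singleton $T = \{(y,j)\}$, the number of $g \in \mathcal{G}$ with $g(y) = j$ is $|\mathcal{G}|/k$ by a uniformity property of the code, giving $\Pr[T \subseteq S] \le 1/k + o(1/k)$; and for larger $T$, the events ``$g(y_\ell) = j_\ell$'' are (almost) independent for a limited-independence code, so $\Pr[T \subseteq S] \le k^{-|T|}$ up to lower-order corrections, which we absorb by shrinking $k$ by a constant factor. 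Then the uniform distribution on $\FF$ witnesses $\kappa$-regularity with $\kappa = \Omega(\log w / \log\log w)$, and by definition $\beta(w) \ge \kappa$.

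The main obstacle will be step (4) for sets $T$ of size larger than one: one needs the function family $\mathcal{G}$ to be simultaneously (a) small enough that it is built from an alphabet of size only $\approx \log w / \log\log w$, (b) large enough and ``balanced'' enough that every short list of coordinate-value constraints is satisfied by only a $\approx k^{-|T|}$ fraction of $\mathcal{G}$, and (c) structured enough that pairwise agreements are always nonempty to keep the system intersecting. Reconciling (b) — which pushes toward high independence — with the requirement that the alphabet be as small as $\log w / \log\log w$ is delicate; I expect one resolves it by using a code of bounded (say $t$-wise) independence and restricting attention to sets $T$ of size at most $t$, noting that a $\kappa$-regular condition for $|T| \le t$ combined with sets of size $\le w$ already suffices after replacing $\kappa$ by $\kappa/2$, since the constraint for large $T$ follows from monotonicity $\Pr[T \subseteq S] \le \Pr[T' \subseteq S]$ for $T' \subseteq T$ of size $t$ whenever $\kappa^{-t} \le (\kappa/2)^{-|T|}$, i.e.\ whenever $w \le t \log_2(\kappa/2) / \log(\kappa)$ — so in fact taking $t \approx w$ and an alphabet of size $k \approx \log w/\log\log w$ (for which $t$-wise independent families of the required size exist) closes the argument. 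The remaining routine work is the arithmetic of choosing the block size and the exact constant in $k$ so that all three requirements hold simultaneously and $|\FF| \ge 2$.
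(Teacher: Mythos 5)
Your proposal correctly identifies the right \emph{shape} of construction — a ground set of the form $Y \times [k]$ with $k \approx \log w / \log\log w$, sets built from ``graphs of functions'' $g\colon Y \to [k]$, the uniform distribution witnessing regularity — and this is indeed the approach the paper takes. However, there is a genuine gap in how you enforce the intersecting property, and your closing paragraph does not repair it.

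You try to get intersection from codeword \emph{agreement}: two sets $S_g, S_{g'}$ intersect only if $g$ and $g'$ agree on some coordinate. You correctly flag the tension this creates with regularity (independence pushes agreement probability down), but the ``resolution'' you sketch does not work. Taking $t$-wise independence with $t \approx w$ over alphabet $[k]$ is effectively full independence, and the full family of functions $Y \to [k]$ is certainly not intersecting — two functions can disagree everywhere (e.g.\ constant $1$ versus constant $2$). No amount of passing to subfamilies of the stated form fixes this while retaining the size needed for a valid probability distribution, and the estimate $w \le t\log_2(\kappa/2)/\log\kappa$ you invoke only controls large-$T$ regularity, not intersection. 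So as written, the proof does not close.

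The paper's construction dodges this entirely: it never requires the functions to agree. Fix blocks $X_1,\dots,X_m$ each of size $t$ with $m = w - t + 1$, and take $\FF$ to be all sets $S$ that \emph{fully contain} some ``pivot'' block $X_i$ and contain exactly one element from each other block $X_j$. If two sets have the same pivot $i$, they share all of $X_i$; if they have different pivots $i \ne i'$, the first contains exactly one element of $X_{i'}$ while the second contains all of $X_{i'}$, so they meet there. Thus intersection is \emph{structural}, coming from the pivot block, and is independent of how the single elements are chosen. You actually mention a ``designated pivot coordinate'' at the start, but you do not use it this way — you treat it as a single coordinate rather than as a full block, and then lean on agreement instead. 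Replacing the single pivot coordinate by a full pivot block is exactly the missing idea. Once that is in place, the regularity calculation is elementary: $\Pr[T \subseteq S] \le 2/t$ for $|T|=1$ and $\Pr[X_i \subseteq S] = 1/m$, and optimizing $\min(t/2, m^{1/t})$ at $t = \Theta(\log w/\log\log w)$ gives $\kappa = \Theta(\log w/\log\log w)$.
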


\begin{proof}
We construct an example of a $\kappa$-regular intersecting $w$-set system for $\kappa=\Omega(\log w / \log \log w)$.
Let $t \le w/2$ to be optimized later and set $m=w-t+1$. Let $X_1,\ldots,X_m$ be disjoint sets of size $t$ each, and let $X=X_1 \cup \ldots \cup X_m$. Consider the set system $\FF$ of all sets $S \subseteq X$ of the following form:
$$
\FF = \{S \subseteq X: \exists i \in [m], \; X_i \subseteq S,\;\forall j \ne i,\; |X_j \cap S|=1\}.
$$
Observe that $\FF$ is an intersecting $w$-set system.

Let $\D$ be the uniform distribution over $\FF$. We show that $\D$ is $\kappa$-regular, and hence $\FF$ is $\kappa$-regular.
There are two extreme cases: for sets $T$ of size $|T|=1$ we have
$$
\Pr_{S \sim \D} \left[T \subseteq S\right] = \frac{1}{m} + \left(1 - \frac{1}{m} \right) \frac{1}{t} \le \frac{2}{t}.
$$
For sets $T=X_i$ we have
$$
\Pr_{S \sim \D} \left[X_i \subseteq S\right] = \frac{1}{m}.
$$
One can verify that these are the two extreme cases which control the regularity, and hence $\FF$ is $\kappa$-regular for
$$
\kappa = \min(t/2, m^{1/t}).
$$
Setting $t=\Theta(\log w / \log \log w)$ gives $\kappa=\Theta(\log w / \log \log w)$.
\end{proof}

We conjecture that this is essentially tight. In fact, by \Cref{claim:satisfying_contains_disjoint} we have that
\[
\beta(w) \leq \gamma(w)
\]
As we proved, \Cref{conj:upper_bound_weak} implies $\gamma(w) = (\log w)^{O(1)}$, thus it also implies $\beta(w) = (\log w)^{O(1)}$.

\begin{proof}[Proof of \Cref{lemma:alpha_beta}]
For $w,r \ge 1$ define
$$
\eta(w,r) = r 2^{r+1} \beta(wr)^r.
$$
We will first prove that
$$
\alpha(w,2r) \le \max\left(\eta(w,r), 2 \alpha(w,r)\right)
$$
and then that this implies the bound
$$
\alpha(w,r) \le \eta(w,r).
$$

Let $\FF$ be a non-trivial $\kappa$-regular $w$-set system which does not contain $2r$ pairwise disjoint sets, where $\kappa > \max\left(\eta(w,r), 2 \alpha(w,r)\right)$. We will show that this leads to a contradiction.

Let $\D$ be the corresponding $\kappa$-regular distribution on $\FF$. Let $\FF' \subseteq \FF$ be any sub set-system with $\D(\FF') \ge 1/2$.
\Cref{claim:regular_lower} then implies that $\FF'$ is $(\kappa/2)$-regular. By our choice of $\kappa$, $\kappa/2 > \alpha(w,r)$, and hence $\FF'$ contains $r$ pairwise disjoint sets.

More generally, consider the following setup. Let $\D':\FF \to \mathbb{R}_{\ge 0}$
with $\D'(S) \le \D(S)$ for all $S \in \FF$. Define $\FF'=\{S: \D'(S)>0\}$
and $\D'(\FF) = \sum \D'(S)$. As long as $\D'(\FF) \ge 1/2$ we are guaranteed that $\FF'$ is $(\kappa/2)$-regular,
and hence contains $r$ pairwise disjoint sets. Consider the following process:
\begin{enumerate}
    \item Initialize $D_0(S)=\D(S)$ for all $S \in \FF$ and $i=0$.
    \item As long as $D_i(\FF) \ge 1/2$ do:
\begin{enumerate}
    \item Let $\FF_i = \{S: D_i(S)>0\}$.
    \item Find $r$ pairwise disjoint sets $S_{i,1},\dots,S_{i,r} \in \FF_i$.
    \item Let $\delta_i = \min(D_i(S_{i,1}),\ldots,D_i(S_{i,r}))$.
    \item Set $D_{i+1}(S)=D_i(S) - \delta_i$ if $S \in \{S_{i,1},\ldots,S_{i,r}\}$, and $D_{i+1}(S)=D_i(S)$ otherwise.
    \item Set $i\leftarrow i + 1$
\end{enumerate}
\end{enumerate}

Assume that the process terminates after $m$ steps. Let $W_i = S_{i,1} \cup \ldots \cup S_{i,r}$, which by construction is a non-empty set
of size at most $wr$. Note that as we assume that $\FF$ does not contain $2r$ pairwise disjoint sets, we obtain that $W_1,\ldots,W_m$
must be an intersecting set system (possibly with some repeated sets). Let $\delta = \sum \delta_i$. As $D_{i+1}(\FF) = D_i(\FF) - \delta_i r$, and as we terminate when $D_m(\FF)<1/2$,
we have
$$
\delta \ge 1/2r.
$$
Let $\FF^*=\{W_1,\ldots,W_m\}$, namely taking each set exactly once.
As it may be the case that $W_1,\ldots,W_m$ are not all distinct, we only know that $|\FF^*| \le m$. Consider the distribution $D^*$ on $\FF^*$ given by $D^*(W) = \frac{1}{w}\sum_{i: W_i=W} w_i$. Then as $\FF^*$ is a non-trivial intersecting set system,
we obtain that $D^*$ cannot be $\beta$-regular for $\beta=\beta(wr)$.

Thus, there exists a nonempty set $T$ of size $|T|=t \ge 1$ such that
$$
\sum_{W \in \FF^*: \; T \subseteq W} D^*(W) \ge \beta^{-t}.
$$
This implies that if we denote $I = \{i \in [m]:\; T \subseteq W_i\}$ then
$$
\sum_{i \in I} w_i \ge w \beta^{-t} \ge \frac{1}{2r \beta^t}.
$$
Next, consider some $i \in I$. Recall that $W_i$ is the union of pairwise disjoint sets $S_{i,1},\ldots,S_{i,r} \in \FF$.
In particular, there must exist $j_i \in [r]$ such that $|T \cap S_{i,j_i}| \ge |T|/r$. We denote $T_i = T \cap S_{i,j_i}$.
As the number of possibles subsets of $T$ is $2^{|T|}$, there must exist $T^* \subseteq T$ such that
$$
\sum_{i \in I:\; T_i=T^*} w_i \ge
2^{-t}
\sum_{i \in I} w_i \ge
\frac{1}{2r (2\beta)^t}.
$$
In particular, $|T^*| \ge |T|/r$ and
$$
\sum_{i \in I:\; T^* \subseteq S_{i,j_i}} w_i \ge \frac{1}{2r (2\beta)^t}.
$$

It may be that the list of $S_{i,j_i}$ contains repeated sets (namely, that $S_{i,j_i}=S_{i',j_{i'}}$ for some $i \ne i'$).
For each $S \in \FF$ let $I(S)=\{i \in I: S_{i,j_i}=S\}$. In particular, $I(S)$ is not empty only for sets $S$ with $T^* \subseteq S$. We can rewrite the sum as
$$
\sum_{i \in I:\; T^* \subseteq S_{i,j_i}} w_i = \sum_{S \in \FF:\; T^* \subseteq S} \sum_{i \in I(S)} w_i.
$$

Next, fix some $S \in \FF$ with $T^* \subseteq S$ and consider the internal sum. Recall that $w_i = D_{i}(S)-D_{i+1}(S)$, and hence the sum is a telescopic sum and can be bounded by
$$
\sum_{i \in I(S)} w_i \le D_{0}(S) - D_m(S) \le D(S).
$$
We thus obtain that
$$
\sum_{S \in \FF: T^* \subseteq S}  \D(S) \ge \sum_{i \in I:\; T^* \subseteq S_{i,j_i}} w_i \ge \frac{1}{2r (2 \beta)^t}.
$$

Recall that $\D$ is $\kappa$-regular. We can upper bound $\kappa$ by
$$
\kappa \le \left( 2r (2 \beta)^t \right)^{1/|T^*|} \le \left( 2r (2 \beta)^t \right)^{r/t} \le 2r (2 \beta)^r = \eta(w,r).
$$
Putting everything together, we get
$$
\alpha(w,2r) \le \max\left(\eta(w,r), 2 \alpha(w,r) \right).
$$
To conclude the proof, note that if $r$ is a power of two then by induction and our choice of $\eta$ we have
$$
\alpha(w,2r) \le \max\left(\eta(w,r), 2 \eta(w,r/2), 4 \eta(w,r/4), \ldots \right) = \eta(w,r).
$$
Thus for a general $r$, if $r \le s \le 2r$ is the smallest power of two that upper bounds $r$ then
$$
\alpha(w,r) \le \alpha(w,s) \le \eta(w,s/2) \le \eta(w,r).
$$

\end{proof}

\begin{proof}[Proof of \Cref{thm:main}]
We put all the pieces together. Let $w \ge 2$. Assume \Cref{conj:upper_bound_weak} holds. \Cref{lemma:satisfy} gives that
$$
\gamma(w) \le (\log w)^{c}
$$
for some constant $c \ge 1$. \Cref{claim:regular_disjoint_base} then gives that
$$
\beta(w) \le \gamma(w)
$$
and \Cref{lemma:alpha_beta} gives that
$$
\alpha(w,r) \le r 2^{r+1} \beta(wr)^r \le r 2^{r+1} (\log (wr))^{c r} \le (\log w)^{c_r}
$$
for some constant $c_r \ge 1$.
Finally, \Cref{claim:alpha_w_r_sunflower} shows that if $\FF$ is a $w$-set system of size $|\FF| \ge (\log w)^{c_r w}$ then $\FF$ contains an $r$-sunflower.
\end{proof}

\section{Further discussions}
Recall that $\beta(w)$ is the maximal $\kappa$ such that there exists an intersecting $\kappa$-regular $w$-set system.

\begin{conjecture}
\label{conj:beta_log}
$\beta(w) \le (\log w)^{O(1)}$.
\end{conjecture}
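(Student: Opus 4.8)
The idea is to make the bound on $\beta(w)$ from \Cref{sec:regular_satisfying} unconditional, that is, to remove the reliance on \Cref{conj:upper_bound_weak}. Recall from \Cref{claim:satisfying_contains_disjoint} that an intersecting set system cannot be $(1/2,1/2)$-satisfying, so $\beta(w) \le \gamma(w)$; hence it suffices to prove, unconditionally, that $\gamma(w) \le (\log w)^{O(1)}$. Equivalently, there is a constant $C$ so that every non-trivial $\kappa$-regular $w$-set system with $\kappa \ge (\log w)^{C}$ is $(1/2,1/2)$-satisfying. In \Cref{lemma:satisfy} this was derived from DNF compression by repeatedly replacing the ``wide part'' of the family by a proper upper bound of bounded size; the plan is to replace that reduction by a direct sampling argument that never leaves the class of width-$w$ monotone DNFs. (Combined with \Cref{lemma:alpha_beta} and \Cref{claim:alpha_w_r_sunflower}, such a bound would give the sunflower bound $(\log w)^{O(w)}$ unconditionally.)

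As a preliminary step I would reduce the distribution-based statement to the uniform (``spread'') one. Taking $T=S$ in the definition of regularity shows that a $\kappa$-regular distribution $\D$ satisfies $\D(S) \le \kappa^{-1}$ for every $S$ in its support, so its mass is already spread out; replacing $\D$ by the uniform distribution on a large multiset with about $N\,\D(S)$ copies of each $S$ changes the regularity by a $1+o(1)$ factor and can only shrink the family. Thus it is enough to show that if $\FF$ is a family of $w$-sets in which no set $T$ is contained in more than a $\kappa^{-|T|}$ fraction of the members (a $\kappa$-spread family), with $\kappa = (\log w)^{O(1)}$, then a uniformly random subset $W$ of the universe (each element kept independently with probability $1/2$) contains some member of $\FF$ with probability greater than $1/2$.

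The heart of the plan is a robust-sunflower sampling argument, in the spirit of Rossman's approximate sunflower lemma \cite{rossman2010monotone} and of the structure-versus-pseudorandomness approach used throughout this paper. Write $W = W_1 \cup \dots \cup W_t$ with $t = \Theta(\log w)$ and each $W_j$ an independent $p$-biased subset for $p = 1 - 2^{-1/t} = \Theta(1/t)$, so that $W$ is exactly $1/2$-biased. I would track, after $j$ rounds, a member $S^{(j)} \in \FF$ minimizing the size of its uncovered part $U_j = S^{(j)} \setminus (W_1 \cup \dots \cup W_j)$, together with the link family $\{S \setminus (S^{(j)} \setminus U_j) : S \in \FF,\ (S^{(j)} \setminus U_j) \subseteq S\}$ consisting of members agreeing with $S^{(j)}$ on the already-covered coordinates. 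The invariant to establish is twofold: (i) this link family stays $\kappa_j$-spread with $\kappa_j$ decaying only mildly in $j$; and (ii) conditioned on $W_1 \cup \dots \cup W_j$ not already containing a member of $\FF$, a fresh $p$-biased $W_{j+1}$ covers a constant fraction of $U_j$ with high probability, so that $|U_{j+1}| \le (1-\Omega(1))|U_j|$. Starting from $|U_0| \le w$, after $t = O(\log w)$ rounds one reaches $U_t = \emptyset$, i.e.\ $S^{(t)} \subseteq W$, so $\FF$ is captured, as required.

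The main obstacle is controlling the spread decay in step (i). Coring out the covered coordinates round after round, a naive argument loses a constant factor each time, which over $\Theta(\log w)$ rounds would only work for $\kappa \ge 2^{\Theta(\log w)} = \mathrm{poly}(w)$ --- recovering nothing beyond the trivial bound $\beta(w) \le w$. Pushing $\kappa$ down to polylogarithmic (and, with more work, to $O(\log w)$, which would essentially match the lower bound in \Cref{claim:beta_lower}) requires a sharper, two-level analysis --- for instance averaging over which covered coordinates to core out, or following the spread of a random sublink rather than the worst one --- so that the loss per round is only a $1+O(1/t)$ factor and compounds to a constant over $t$ rounds. A secondary and more routine point is the weighted-to-uniform conversion mentioned above, together with the non-triviality and non-redundancy side conditions needed to guarantee that the two captured sets are genuinely distinct, hence disjoint; these can be handled exactly as in \Cref{claim:regular_lower} and \Cref{cor:approx_contains_sunflower}.
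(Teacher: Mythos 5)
This statement is an open conjecture in the paper; the authors do not prove it, and instead record the conditional implication $\beta(w) \le \gamma(w) \le (\log w)^{O(1)}$ via \Cref{claim:satisfying_contains_disjoint} and \Cref{lemma:satisfy}, the latter of which assumes the DNF-compression \Cref{conj:upper_bound_weak}. Your proposal is therefore not comparable to a ``paper proof'' --- it is a genuinely different attempt, aiming to prove the bound unconditionally by a direct probabilistic argument on spread families and to bypass DNF compression entirely. The preliminary reduction from a weighted $\kappa$-regular distribution to a uniform spread multiset is routine and correct, and the sampling scheme --- writing a $1/2$-biased $W$ as a union of $t = \Theta(\log w)$ independent $p$-biased rounds with $p = \Theta(1/\log w)$, and tracking the least-uncovered member together with its link --- has the right overall shape.

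As written, though, the plan has two interlocking gaps, one of which you flag and one of which you do not. The first, which you name, is that coring out the covered coordinates costs a constant factor in the spread each round, and over $\Theta(\log w)$ rounds this compounds to $\mathrm{poly}(w)$, recovering nothing beyond the trivial $\beta(w)\le w$ of \Cref{claim:beta_upper}. The second is that your invariant (ii), the geometric decay $|U_{j+1}| \le (1-\Omega(1))|U_j|$, is simply false for any fixed set: a $p$-biased round with $p = \Theta(1/\log w)$ covers only a $\Theta(1/\log w)$ fraction of $U_j$ in expectation. Getting constant-fraction progress per round requires that the spread condition supply many near-covered candidates in the link family among which the minimizer can be chosen adaptively, and that is precisely where the hard work sits. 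Your suggestions --- averaging over which covered coordinates to core out, or following a random sublink --- point in the right direction but are not developed to where the per-round loss shrinks to $1 + O(1/\log w)$. So the proposal does not establish the statement; it is a plausible research program with its central technical step absent. (For context, that program was subsequently carried out --- the missing ingredient is a global encoding/counting argument over the whole sequence of rounds rather than round-by-round link tracking, due to Alweiss, Lovett, Wu and Zhang, with refinements by Rao, Tao, and Bell, Chueluecha and Warnke yielding $\beta(w) = O(\log w)$, which resolves this conjecture and nearly matches \Cref{claim:beta_lower} --- but none of that content appears in your sketch.)
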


We would like to point out that in \Cref{conj:beta_log}, the assumption that the set system is intersecting cannot be replaced by a weaker assumption that it is almost intersecting, namely that most pairs of sets intersect. To see that, consider the following example.

\begin{example}
Let $\FF$ be the family of all sets of size $w$ in a universe of size $n=c w^2$. By choosing an appropriate constant $c>0$, we get that $99 \%$ of the sets $S,S' \in \FF$ intersect. However, $\FF$ is $(w/c)$-regular.
\end{example}

The following is an interesting family of examples, that might help shed light on \Cref{conj:beta_log}.

\begin{example}
\label{ex:subspace}
Let $\F_p$ be a finite field and $n \ge 1$. Let $V \subset \F_p^n$ be a linear subspace of dimension $k$.
Given a set of coordinates $I \subseteq [n]$, define $V_I = \{ (v_i)_{i \in I}: v \in V\}$ to be the subspace obtained by
restricting vectors $v \in V$ to coordinates $I$. We say that $V$ is \emph{$\alpha$-large} if
$$
\dim(V_I) \ge \alpha |I| \qquad \forall I \subseteq [n].
$$
In particular, this implies that $k \ge \alpha n$.

Next, we define a set system corresponding to a subspace.
Let $X=\{(i,a): i \in [n], a \in \F_p\}$. For any vector $v \in \F_p^n$ define its corresponding set
$$
S(v)=\{(i,v_i): i \in [n]\} \subset X.
$$
For a subspace $V \subset \F_p^n$ define the set system
$$
\FF(V) = \{S(v): v \in V\}.
$$
Observe that:
\begin{enumerate}[(i)]
\item $\FF(V)$ is an $n$-set system of size $p^k$.
\item For any $T \subseteq X$ it holds that
$|\{S \in \FF(V): T \subseteq S\}| \le p^{-\alpha|T|} |\FF|$.
Hence $\FF(V)$ is $\kappa$-regular for $\kappa=p^{\alpha}$.
\item $\FF(V)$ is intersecting iff any $v \in V$ contains at least one zero coordinate.
\end{enumerate}
If \Cref{conj:beta_log} holds and $p \ge (\log n)^c$ for some absolute constant $c>0$, then it must hold that $V$ contains a vector with no zero coordinates. This motivates the following problem.
\end{example}

\begin{problem}\label{prob:subspace}
Let $V \subset \F_p^n$ be a $\alpha$-large subspace. Prove that if $p \ge (\log n)^c$, for some $c=c(\alpha)$, then $V$ must contain
a vector with no zero coordinates.
\end{problem}

A previous version of this paper gave a more restricted version of \Cref{ex:subspace}, corresponding to the case when $V$ spans an MDS code. Namely, $\dim(V_I)=|I|$ for all $I \subseteq [n]$ with $|I| \le k$. Ryan Alweiss \cite{alweiss-personal} proved the analog of \Cref{prob:subspace}
for this case, in fact where $p \ge p_0(n/k)$.

\bibliographystyle{alpha}
\bibliography{reference}

\end{document}